\documentclass{amsart}

\usepackage{amsfonts, amssymb, amsmath, eucal, verbatim, amsthm, amscd, enumerate}
\usepackage{setspace}

\usepackage{ulem}

\oddsidemargin 0.15in
\evensidemargin 0.15in
\textwidth=6.2in
\textheight=8in \itemsep=0in
\parsep=0in

\newtheorem{theorem}{Theorem}[section]
\newtheorem{lemma}[theorem]{Lemma}
\newtheorem{proposition}[theorem]{Proposition}

\newtheorem{corollary}[theorem]{Corollary}

\theoremstyle{definition}

\theoremstyle{remark}
\newtheorem*{remark}{Remark}

\newtheorem*{notation}{Notation}

\newtheorem*{overview}{Overview}

\numberwithin{equation}{section}
\parindent0pt
\addtolength{\parskip}{12pt}

\def\R{{\mathbb R}}

\def\N{{\mathbb N}}

\def\p#1{{\left({#1}\right)}}
\def\b#1{{\left\{{#1}\right\}}}

\begin{document}

\title[Optimal forward and reverse estimates with angular smoothing]
 {Optimal forward and reverse estimates of Morawetz and Kato--Yajima
type with angular smoothing index}

\author[N.~Bez]{Neal Bez}
\address{Neal Bez, Department of Mathematics, Graduate School of Science and 
Engineering,
Saitama University, Saitama 338-8570, Japan}
\email{nealbez@mail.saitama-u.ac.jp}

\author[M.~Sugimoto]{Mitsuru Sugimoto}
\address{Mitsuru Sugimoto, Graduate School of Mathematics, Nagoya University \\
Furocho, Chikusa-ku, Nagoya 464-8602, Japan}
\email{sugimoto@math.nagoya-u.ac.jp}

\subjclass[2010]{Primary 35B45; Secondary 35P10, 35B65}

\keywords{Smoothing estimates, optimal constants, extremisers}


\begin{abstract}
For the solution of the free Schr\"odinger equation, we obtain the optimal 
constants and characterise extremisers for forward and reverse smoothing 
estimates which are global in space and time, contain a homogeneous and
radial weight in the space variable, and incorporate a certain angular regularity. 
This will follow from a more general result which permits analogous
sharp forward and reverse smoothing estimates and a characterisation
of extremisers for the solution of the free Klein--Gordon and wave
equations. The nature of extremisers is shown to be sensitive to both
the dimension and the size of the smoothing index relative to the dimension. 
Furthermore, in four spatial dimensions and certain special values of the
smoothing index, we obtain an exact identity for each of these evolution 
equations.
\end{abstract}

\maketitle \thispagestyle{empty}

\section{Introduction}

For $d \geq 2$ and $s \in (-\frac{1}{2},\frac{d}{2}-1)$ the solution
of the free Schr\"odinger equation $i\partial_t u + \frac{1}{2}\Delta u = 0$
satisfies the smoothing estimate
\begin{equation} \label{e:KY}
\int_{\mathbb{R}} \int_{\mathbb{R}^d} |u(x,t)|^2 \, \frac{\mathrm{d}x\mathrm{d}t}{|x|^{2(1+s)}}  \leq C \|u(0)\|_{\dot{H}^s(\mathbb{R}^d)}^2,
\end{equation}
where $\dot{H}^s(\mathbb{R}^d)$ is the usual homogeneous Sobolev
space of order $s$. This estimate was established by Kato and Yajima
\cite{KatoYajima} for $s \in (-\frac{1}{2},0]$ whenever $d \geq 3$,
and $s \in (-\frac{1}{2},0)$ for $d=2$ (see also \cite{BK} for an
alternative approach, and \cite{Su1}, \cite{Vilela} and
\cite{Watanabe} for the full range $s \in
(-\frac{1}{2},\frac{d}{2}-1)$). Estimates like \eqref{e:KY} are
often referred to as Kato--Yajima smoothing estimates, or Morawetz
estimates, since similar estimates for the Klein--Gordon equation
were established in the earlier work \cite{Morawetz}.

The focus of this paper are certain angular refinements of
\eqref{e:KY}. Hoshiro \cite{Hoshiro} proved that whenever $d \geq 3$
and $s \in (-\frac{1}{2},0]$, there is a finite constant $C$ such
that
\begin{equation} \label{e:Hoshiro}
\int_{\mathbb{R}} \int_{\mathbb{R}^d} |(1-\Lambda)^{\frac{1+2s}{4}}u(x,t)|^2 \, \frac{\mathrm{d}x\mathrm{d}t}{|x|^{2(1+s)}}  \leq C \|u(0)\|_{\dot{H}^s(\mathbb{R}^d)}^2,
\end{equation}
where $-\Lambda$ is the Laplace--Beltrami operator on the unit
sphere $\mathbb{S}^{d-1}$ homogeneously extended to $\mathbb{R}^d$.
In fact, \eqref{e:Hoshiro} is also valid in the range $s \in
(-\frac{1}{2},\frac{d}{2}-1)$ for any $d \geq 2$ (see \cite{Su1} for
the full range). Interestingly, it was recently observed by Fang and
Wang \cite{FangWang} that a reverse form of \eqref{e:Hoshiro}
exists; that is, for the same $(d,s)$ there exists a strictly
positive constant $c$ such that
\begin{equation} \label{e:Hoshiroreverse}
\int_{\mathbb{R}} \int_{\mathbb{R}^d} |(1-\Lambda)^{\frac{1+2s}{4}}u(x,t)|^2 \, \frac{\mathrm{d}x\mathrm{d}t}{|x|^{2(1+s)}}  \geq c \|u(0)\|_{\dot{H}^s(\mathbb{R}^d)}^2.
\end{equation}

In the critical case $s = -\frac{1}{2}$ the estimate \eqref{e:KY}
fails, and the full gain of a half-derivative does not materialise
in this way. One may interpret \eqref{e:Hoshiro} as a replacement
for this false estimate since, formally, $(1-\Lambda)^{\sigma}$
behaves like $|x|^{2\sigma}|\nabla|^{2\sigma}$ in the sense of the
order of the derivative and the decay. A different replacement for
the failure of \eqref{e:KY} when $s = -\frac{1}{2}$ is the local
smoothing estimate
\begin{equation} \label{e:local}
\sup_{R > 0} \frac{1}{R}  \int_\mathbb{R} \int_{|x| \leq R} |\nabla u(x,t)|^2 \, \mathrm{d}x\mathrm{d}t \leq C\|u(0)\|_{\dot{H}^{\frac{1}{2}}(\mathbb{R}^d)}^2
\end{equation}
established in \cite{CS}, \cite{Sjolin} and \cite{Vega}. We remark
that it was recently observed by Vega and Visciglia \cite{VV} that
\eqref{e:local} also enjoys a reverse form; in fact, they prove
\begin{equation*}
\sup_{R > 0} \frac{1}{R}  \int_\mathbb{R} \int_{|x| \leq R} |\nabla u(x,t)|^2 \, \mathrm{d}x\mathrm{d}t \geq 2\pi\|u(0)\|_{\dot{H}^{\frac{1}{2}}(\mathbb{R}^d)}^2.
\end{equation*}
The critical case was also considered in \cite{Su2} and \cite{RS1},
in the context of more general elliptic operators, and applied to
time global existence of solutions to certain derivative nonlinear
equations in \cite{RS3}.

One of our main results in this paper is to compute the optimal
constants and characterise extremisers for the forward and reverse
estimates in \eqref{e:Hoshiro} and \eqref{e:Hoshiroreverse}. These
optimal estimates will follow from a more general result, which we
state first. Our arguments are not only restricted to the
Schr\"odinger propagator, and we consider forward and reverse
estimates of the form
\begin{equation} \label{e:Main}
c  \|u(0)\|_{L^2(\mathbb{R}^d)}^2 \leq
\int_{\mathbb{R}} \int_{\mathbb{R}^d} |\psi(|\nabla|) \,\theta(-\Lambda) \,u(t,x)|^2 \, \frac{\mathrm{d}x\mathrm{d}t}{|x|^\tau}
\leq C  \|u(0)\|_{L^2(\mathbb{R}^d)}^2
\end{equation}
for solutions of $i\partial_t u + \phi(|\nabla|)u = 0$, where $\tau
\in (1,d)$, and $\phi$ and $\psi$ are such that
\begin{equation} \label{e:phipsi}
\psi(\rho)^2 = |\phi'(\rho)|\rho^{1-\tau}.
\end{equation}
Here, we are assuming that the
dispersion relation $\phi$ is injective and differentiable.
\begin{theorem}\label{t:Main}
Let $d \geq 2$ and $\tau \in (1,d)$. Suppose
\[
\beta_k = \pi 2^{2-\tau} \frac{\Gamma(\tau - 1)\Gamma(k+\tfrac{d-\tau}{2})}
{\Gamma(\tfrac{\tau}{2})^2\Gamma(k+\tfrac{d+\tau}{2}-1)} \,|\theta(k(k+d-2))|^2,
\]
and
\[
\mathbf{k} = \{ k \in \mathbb{N}_0 : \inf_{\ell \in \mathbb{N}_0} \beta_\ell = \beta_{k}\} \qquad \text{and}
\qquad \mathbf{K} = \{ k \in \mathbb{N}_0 : \sup_{\ell \in \mathbb{N}_0} \beta_\ell = \beta_{k}\}.
\]
If $i\partial_t u + \phi(|\nabla|)u = 0$ and $\psi(\rho)^2 =
|\phi'(\rho)|\rho^{1-\tau}$ then
\begin{equation*}
\inf_{k\in\N_0} \beta_k \, \|u(0)\|_{L^2(\mathbb{R}^d)}^2 \leq
\int_{\mathbb{R}} \int_{\mathbb{R}^d} |\psi(|\nabla|) \,\theta(-\Lambda) \,u(t,x)|^2 \, \frac{\mathrm{d}x\mathrm{d}t}{|x|^\tau}
\leq \sup_{k\in\N_0} \beta_k \, \|u(0)\|_{L^2(\mathbb{R}^d)}^2
\end{equation*}
and the constants are optimal. Furthermore, nonzero initial data
$u(0)$ is an extremiser for the lower bound if and only if $u(0)$ belongs to $\bigoplus_{k \in \mathbf{k}}
\mathfrak{H}_k$, and an extremiser for the upper bound if and only
if $u(0)$ belongs to $\bigoplus_{k \in
\mathbf{K}} \mathfrak{H}_k$.
\end{theorem}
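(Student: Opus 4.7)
My plan is to reduce the estimate to a family of exact identities, one for each spherical harmonic subspace, and then obtain the two-sided bound by orthogonality. Let $\mathfrak{H}_k$ denote the subspace of $L^2(\mathbb{R}^d)$ whose angular part is a spherical harmonic of degree $k$. Since $-\Lambda$ has eigenvalue $k(k+d-2)$ on $\mathfrak{H}_k$, the operator $\theta(-\Lambda)$ acts as multiplication by the scalar $\theta(k(k+d-2))$ on $\mathfrak{H}_k$. Moreover, both the propagator $e^{-it\phi(|\nabla|)}$ and the radial weight $|x|^{-\tau}$ preserve the decomposition $L^2(\mathbb{R}^d) = \bigoplus_k \mathfrak{H}_k$. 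So if $u(0) = \sum_k u_k$ is the spherical harmonic decomposition, then the LHS of the main estimate equals $\sum_k |\theta(k(k+d-2))|^2 \, I_k$, where $I_k := \int\!\!\int |\psi(|\nabla|) u_k(t,x)|^2 \, |x|^{-\tau}\,\mathrm{d}x\,\mathrm{d}t$.

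The heart of the argument is the exact identity
\[
I_k \;=\; \pi\,2^{2-\tau}\,\frac{\Gamma(\tau-1)\,\Gamma(k+\tfrac{d-\tau}{2})}{\Gamma(\tfrac{\tau}{2})^{2}\,\Gamma(k+\tfrac{d+\tau}{2}-1)}\,\|u_k(0)\|_{L^2(\mathbb{R}^d)}^{2},
\]
so that $\int\!\!\int |\psi(|\nabla|)\theta(-\Lambda)u|^2 |x|^{-\tau}\mathrm{d}x\mathrm{d}t = \sum_k \beta_k \|u_k(0)\|^2$. To establish it, I would write $\widehat{u_k(0)}(\xi) = f(|\xi|)Y(\xi/|\xi|)$ with $Y$ a normalised spherical harmonic of degree $k$, apply Plancherel in time to convert $\int_\mathbb{R} |\cdots|^2 \mathrm{d}t$ into an integral on the characteristic variety of $\phi$, and change variables $\eta = \phi(\rho)$: this is exactly where the constraint $\psi(\rho)^2 = |\phi'(\rho)|\rho^{1-\tau}$ is designed to cancel the Jacobian and the radial Lebesgue weight, leaving the clean factor $\rho^{1-\tau}$. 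The remaining $x$-integral, after expanding in Bessel functions via the standard Funk--Hecke/Bochner identity for radial$\times$spherical harmonic Fourier transforms, reduces to the Weber--Schafheitlin integral $\int_0^\infty J_{k+(d-2)/2}(\rho r)^2\, r^{1-\tau}\,\mathrm{d}r$, which evaluates precisely to the Gamma function ratio above; this step is what ties $\tau \in (1,d)$ to convergence of the Bessel integral.

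Granted the identity, the theorem follows immediately: by orthogonality of the $\mathfrak{H}_k$,
\[
\inf_{k}\beta_k\,\|u(0)\|_{L^2}^2 \;\leq\; \sum_{k}\beta_k\|u_k(0)\|_{L^2}^2 \;\leq\; \sup_{k}\beta_k\,\|u(0)\|_{L^2}^2,
\]
with equality in the lower (resp.\ upper) bound exactly when the expansion is supported on indices $k$ with $\beta_k = \inf_\ell \beta_\ell$ (resp.\ $\sup_\ell \beta_\ell$). This yields both the sharpness of the constants $\inf_k\beta_k$, $\sup_k\beta_k$ and the characterisation of extremisers as elements of $\bigoplus_{k\in\mathbf{k}}\mathfrak{H}_k$ and $\bigoplus_{k\in\mathbf{K}}\mathfrak{H}_k$ respectively.

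The main technical obstacle is the exact Bessel computation: one must verify the Weber--Schafheitlin integral converges throughout $\tau\in(1,d)$, identify its value with the prescribed Gamma ratio, and justify the formal Plancherel-plus-change-of-variables computation rigorously (e.g.\ by a density argument on initial data for which $f(\rho)$ is compactly supported away from the zero set of $\phi'$). The injectivity and differentiability of $\phi$ ensure the change of variables $\eta=\phi(\rho)$ is well-defined; mild integrability of $\theta$ on the spectrum of $-\Lambda$ is implicit in assuming each $\beta_k$ is finite.
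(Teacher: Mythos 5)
Your proposal is correct and follows essentially the same route as the paper: decompose the data into the spherical harmonic subspaces $\mathfrak{H}_k$, establish the exact identity on each subspace (the paper phrases this as the spectral decomposition $S_\theta^*S_\theta=\sum_k\lambda_k|\theta(\mu_k)|^2H_k$, quoting its earlier work for the $\theta\equiv 1$ case, where the $\lambda_k$ come from precisely the Weber--Schafheitlin integral $\int_0^\infty J_{k+(d-2)/2}(\rho r)^2r^{1-\tau}\,\mathrm{d}r$ you identify), and then conclude by orthogonality. The only presentational difference is that the paper derives the subspace identity by commuting $\theta(-\Lambda)$ past the Fourier transform and the propagator (via Funk--Hecke) and citing the $\theta\equiv1$ result, whereas you sketch the Plancherel-in-time and Bessel computation directly; both rest on the same ingredients.
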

Here, we are using the notation $\mathbb{N}_0$ for
$\{0,1,2,\ldots\}$, and $\mathfrak{H}_k$ for the space of all linear
combinations of functions
\begin{equation*}
\xi \mapsto P(\xi) f_0(|\xi|) |\xi|^{-d/2-k+1/2}
\end{equation*}
where $P$ is a homogeneous harmonic polynomial of order $k$ and $f_0
\in L^2(0,\infty)$. Also, $\theta(-\Lambda)$ is the homogeneous
extension of the operator $\theta(-\Lambda)$ on the sphere
$\mathbb{S}^{d-1}$; an explicit definition will be given later in
Section \ref{section:TheoremMainProof}.

We remark that $\mathfrak{H}_{0}$ is the
space of square-integrable radially symmetric functions. If the
index set $\mathbf{k}$ is empty then there are no extremisers for
the lower bound, and similarly for $\mathbf{K}$ and the upper bound.

The statement of Theorem \ref{t:Main} is rather general and as a
consequence of the minimal assumptions on $\theta$, the theorem does
not guarantee the strict positivity of $\inf_{k \in \mathbb{N}_0}
\beta_k$ or the finiteness of $\sup_{k \in \mathbb{N}_0} \beta_k$.
The case of primary interest in this paper is
\[
\theta(\rho) = (1+\rho)^{\frac{\tau - 1}{4}}.
\]
For such $\theta$, it is true that $\inf_{k \in \mathbb{N}_0}
\beta_k$ is strictly positive and $\sup_{k \in \mathbb{N}_0}
\beta_k$ is finite, and we will obtain the optimal constants in
\eqref{e:Hoshiro} and \eqref{e:Hoshiroreverse} by taking $\phi(\rho)
= \frac{1}{2}\rho^2$ and $\tau = 2(1+s)$ (so that $\psi(\rho) =
\rho^{-s}$). In the subsequent section we give a very precise
description of these optimal constants; we delay our presentation of
this result because it is necessary to first establish some
technical notation. At this stage we emphasise that the case
$(d,\tau) = (4,2)$ is particularly special. Here, the sharp form of
\eqref{e:Hoshiro} and \eqref{e:Hoshiroreverse} is in fact an exact
\emph{identity}. By taking appropriate choices of $\phi$ and $\tau$,
we also obtain the analogous identities for solutions of the free
wave and Klein--Gordon equations. We collect these in the following.
\begin{theorem} \label{t:exactidentity}
On $\mathbb{R}^{4+1}$, all solutions of the Schr\"odinger equation
$i\partial_tu + \frac{1}{2}\Delta u = 0$ satisfy
\[
\int_{\mathbb{R}} \int_{\mathbb{R}^4} |(1-\Lambda)^{\frac{1}{4}}u(x,t)|^2 \, \frac{\mathrm{d}x\mathrm{d}t}{|x|^{2}}
= \pi \|u(0)\|_{L^2(\mathbb{R}^4)}^2,
\]
all solutions of the wave equation $\partial_{tt}u - \Delta u = 0$
satisfy
\begin{equation*}
2\int_\mathbb{R} \int_{\mathbb{R}^4} |(1-\Lambda)^{\frac{1}{4}}u(x,t)|^2 \, \frac{\mathrm{d}x\mathrm{d}t}{|x|^{2}} =
  \pi \left(\|u(0)\|_{\dot{H}^{\frac{1}{2}}(\mathbb{R}^4)}^2 + \|\partial_tu(0)\|_{\dot{H}^{-\frac{1}{2}}(\mathbb{R}^4)}^2\right),
\end{equation*}
and all solutions of the Klein--Gordon equation $\partial_{tt} u -
\Delta u + u = 0$ satisfy
\[
2\int_{\mathbb{R}} \int_{\mathbb{R}^4} |(1-\Delta)^{\frac{1}{4}}(1-\Lambda)^{\frac{1}{4}}u(x,t)|^2 \, \frac{\mathrm{d}x\mathrm{d}t}{|x|^{2}}
= \pi \left(\|u(0)\|_{L^2(\mathbb{R}^4)}^2 +
\|\nabla u(0)\|_{L^2(\mathbb{R}^4)}^2 +
\|\partial_tu(0)\|_{L^2(\mathbb{R}^4)}^2\right).
\]
\end{theorem}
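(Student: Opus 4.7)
The plan is to deduce all three identities from Theorem~\ref{t:Main} in the special regime $(d,\tau)=(4,2)$ with $\theta(\rho)=(1+\rho)^{1/4}$. The decisive observation is that in this regime the entire sequence $(\beta_k)$ collapses to a constant: using $\Gamma(1)=1$ and $|\theta(k(k+2))|^2 = \sqrt{(k+1)^2}=k+1$, one has
\[
\beta_k = \pi\cdot \frac{\Gamma(k+1)}{\Gamma(k+2)}\,(k+1)=\frac{\pi}{k+1}\cdot(k+1)=\pi
\]
for every $k\in\N_0$. Consequently $\inf_k\beta_k=\sup_k\beta_k=\pi$, so the two-sided sandwich of Theorem~\ref{t:Main} becomes an exact identity. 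For the Schr\"odinger equation $\phi(\rho)=\tfrac12\rho^2$ gives $\psi(\rho)^2=\rho\cdot \rho^{-1}=1$, so this is already the first identity in Theorem~\ref{t:exactidentity}.

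For the wave and Klein--Gordon equations I would pass through the half-wave decomposition $u=u_++u_-$, where $u_\pm$ solves the corresponding first-order equation $i\partial_tu_\pm +\phi_\pm(|\nabla|)u_\pm=0$: for the wave equation $\phi_\pm(\rho)=\mp\rho$, and for Klein--Gordon $\phi_\pm(\rho)=\mp\sqrt{1+\rho^2}$. A short computation shows that the data decompose as $u_\pm(0)=\tfrac12\bigl(u(0)\mp i \Omega^{-1}\partial_tu(0)\bigr)$ with $\Omega=|\nabla|$ or $\Omega=\sqrt{1-\Delta}$ respectively. In both cases Theorem~\ref{t:Main} yields $\psi(\rho)=\rho^{-1/2}$ and $\psi(\rho)=(1+\rho^2)^{-1/4}$, which do not match the operators appearing in the target identities. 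To absorb the unwanted factor I would apply Theorem~\ref{t:Main} not to $u_\pm$ but to $v_\pm:=|\nabla|^{1/2}u_\pm$ (wave) or $v_\pm:=(1-\Delta)^{1/2}u_\pm$ (Klein--Gordon); these still solve the corresponding half-wave equation, and the net effect is to replace $\psi(|\nabla|)$ by the identity or by $(1-\Delta)^{1/4}$ respectively, producing
\[
\int_\R\int_{\R^4}|(1-\Lambda)^{1/4}u_\pm|^2\,\frac{\mathrm{d}x\mathrm{d}t}{|x|^2}=\pi\|u_\pm(0)\|_{\dot H^{1/2}}^2
\]
and its Klein--Gordon analogue with $\dot H^{1/2}$ replaced by $H^1$.

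The step I expect to be the main obstacle is handling the cross terms that arise from $|u|^2=|u_++u_-|^2$. To kill
\[
2\,\Re\int_\R\int_{\R^4}(1-\Lambda)^{1/4}u_+\cdot\overline{(1-\Lambda)^{1/4}u_-}\,\frac{\mathrm{d}x\mathrm{d}t}{|x|^2}
\]
I would argue by disjoint temporal spectrum: the time Fourier transform of $u_+$ is supported where $\omega=+\phi(|\xi|)>0$ and that of $u_-$ where $\omega=-\phi(|\xi|)<0$, and the spatial-frequency multiplier $(1-\Lambda)^{1/4}$ commutes with translations in $t$, so these supports remain disjoint after applying it. Freezing $x$, Plancherel in $t$ then forces the pointwise (in $x$) time-integral to vanish, and the $|x|^{-2}$-weighted spatial integral inherits this.

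Finally I would assemble the identities: since the cross terms vanish, summing the half-wave identities gives $\pi(\|\alpha_+\|_X^2+\|\alpha_-\|_X^2)$ with $X=\dot H^{1/2}$ or $X=H^1$. A direct expansion using the explicit formula for $\alpha_\pm$ cancels the mixed real inner products and yields $\tfrac12(\|u(0)\|_X^2+\|\Omega^{-1}\partial_tu(0)\|_X^2)$; for Klein--Gordon the algebraic identity $\|f\|_{L^2}^2+\|\nabla f\|_{L^2}^2=\|(1-\Delta)^{1/2}f\|_{L^2}^2$ then converts $\|\Omega^{-1}\partial_tu(0)\|_{H^1}^2$ into $\|\partial_tu(0)\|_{L^2}^2$. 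Multiplying by the factor $2$ produces the stated identities. No single step requires a new idea beyond Theorem~\ref{t:Main}, the half-wave decomposition, and Plancherel.
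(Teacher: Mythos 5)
Your argument is correct and follows essentially the same route as the paper: the identity $\beta_k\equiv\pi$ for $(d,\tau)=(4,2)$ (the content of the $(4,2)$ row of Theorem~\ref{t:betak}, i.e.\ $\mathbf{c}(4,0)=\mathbf{C}(4,0)=\pi$) collapses the two-sided bound of Theorem~\ref{t:Main} to an equality, and the wave and Klein--Gordon cases are handled exactly as in the proofs of Corollaries~\ref{c:W} and~\ref{c:KG} via the half-wave decomposition, vanishing of cross terms by disjoint temporal Fourier supports, and the parallelogram law. The only blemishes are immaterial sign conventions in $\phi_\pm$ and $f_\pm$, which do not affect the squared norms.
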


In the related case where $\theta(\rho) = \rho^{\frac{\tau -
1}{4}}$, the situation is different because $\inf_{k \in
\mathbb{N}_0} \beta_k = 0$ and there is no reverse estimate. We do,
however, provide a explicit description of the upper bound $\sup_{k
\in \mathbb{N}_0} \beta_k$ in Section \ref{section:homogeneousg}. We
remark in passing that $\sup_{k \in \mathbb{N}_0} \beta_k$ is always
finite whenever $\theta(\rho) = O(\rho^{\frac{\tau - 1}{4}})$; this
is true because
\[
\frac{\Gamma(k+\tfrac{d-\tau}{2})}{\Gamma(k+\tfrac{d+\tau}{2}-1)} = O(k^{1-\tau}) \qquad \text{as $k \to \infty$,}
\]
which easily follows from Stirling's formula.

The case where $\theta$ is identically equal to one with no
smoothing along the sphere corresponds to \eqref{e:KY}. The optimal
constant for the forward estimate in \eqref{e:KY} has appeared in a
number of earlier works, including \cite{Chen}, \cite{OzawaRogers},
\cite{Watanabe} and our own \cite{BS} in the general case, and
\cite{Simon} for the case $s=0$ (see also \cite{WaltherBest}). In
\cite{BS}, we proceed using spectral considerations; in this work,
we build on \cite{BS} and the proof of Theorem \ref{t:Main} is also
based on spectral considerations. In \cite{OzawaRogers}, sharp angular refinements
of \eqref{e:KY} of a different nature to those considered in this paper
are established, in the forward direction,
by a different approach
through the sharp Hardy--Littlewood--Sobolev inequality on the sphere.
We also remark that when $\theta$
is identically equal to one, the sequence $(\beta_k)_{k \in
\mathbb{N}_0}$ is decreasing and tends to zero as $k$ tends to
infinity (see \cite{BS}). Hence there is no reverse inequality in
this case.

\begin{overview}
The upper bound in Theorem \ref{t:Main} is stated in the case of the
Schr\"odinger propapator in \cite{BS2}, and the substantially more
complete results of this paper were partially announced in
\cite{BS2}.

In Section \ref{section:explicit} we consider the important case
$\theta(\rho) = (1+\rho)^{\frac{\tau-1}{4}}$, where we provide a
comprehensive description of the optimal constants $\inf_{k \in
\mathbb{N}_0} \beta_k$ and $\sup_{k \in \mathbb{N}_0} \beta_k$ and
when these extrema are attained. The proofs of these results are
contained in Section \ref{section:computation}. As applications, we
provide the optimal constants and characterise the extremisers for
\eqref{e:Hoshiro} and \eqref{e:Hoshiroreverse}, along with analogous
results for the wave and Klein--Gordon equations. In Section \ref{section:TheoremMainProof} we prove Theorem
\ref{t:Main}.

Finally, in Section \ref{section:homogeneousg}, we provide several
further results, including an analysis of the case $\theta(\rho) = \rho^{\frac{\tau-1}{4}}$.
We also include some further generalisations of Theorem \ref{t:Main} to allow
weights which are not homogeneous, and dispersion relations and smoothing functions
$\phi$ and $\psi$ which are not required to satisfy \eqref{e:phipsi}. The disadvantage of
working in such generality is that a completely explicit description of optimal constants and
extremisers is not possible. The main focus of this paper is to establish such information
and this is the reason that we have presented the results in the Introduction in the
case where the weight is homogeneous, and $\phi$ and $\psi$ satisfy \eqref{e:phipsi}.
\end{overview}

\section{The case $\theta(\rho) = (1+\rho)^{\frac{\tau-1}{4}}$} \label{section:explicit}
\label{explicit} Theorem \ref{t:Main} makes it clear that to obtain
explicit expressions for the optimal constants in estimates of the
form \eqref{e:Main}, and to characterise the space of extremisers,
we must compute
\[
\mathbf{b}(d,\tau;\theta) = \inf_{k \in \mathbb{N}_0} \beta_k \qquad \text{and} \qquad \mathbf{B}(d,\tau;\theta) = \sup_{k \in \mathbb{N}_0} \beta_k\,,
\]
and understand the index sets
\[
\mathbf{k}(d,\tau;\theta) = \{ k \in \mathbb{N}_0 : \beta_{k} = \mathbf{b}(d,\tau;\theta) \} \qquad \text{and}
\qquad \mathbf{K}(d,\tau;\theta) = \{ k \in \mathbb{N}_0 : \beta_{k} = \mathbf{B}(d,\tau;\theta)\},
\]
where $\beta_k$ is given by
\[
\beta_k = \beta_k(d,\tau;\theta) = \pi 2^{2-\tau} \frac{\Gamma(\tau - 1)\Gamma(k+\tfrac{d-\tau}{2})}
{\Gamma(\tfrac{\tau}{2})^2\Gamma(k+\tfrac{d+\tau}{2}-1)} \,|\theta(k(k+d-2))|^2.
\]
The main result in this section is to do this in the case
\[
\theta(\rho) = (1+\rho)^{\frac{\tau - 1}{4}}.
\]
In order to state our result here, it is necessary to introduce a
little notation. For $d \geq 5$, we introduce two parameters $\tau_*
\in (1,d)$ and $\tau^* \in (1,d)$, depending only on $d$, as the
unique solution of the equations
\begin{equation*}
d^{\frac{\tau_*-1}{2}}\bigg(\frac{d-\tau_*}{2}\bigg) = \frac{d+\tau_*}{2}-1
\end{equation*}
and
\[
\Gamma\bigg(\frac{d-\tau^*}{2}\bigg) = \Gamma\bigg(\frac{d+\tau^*}{2} - 1\bigg)
\]
respectively. It is not immediately clear that $\tau_*$ and $\tau^*$
are well-defined so we provide a proof of this at the end of this
section, in order not to delay the presentation of the main results
of this section.
\begin{remark}
We will show in the course of the proof of the following theorem
that $\tau_* \leq \tau^*$.
\end{remark}
For $d \geq 5$ and $\tau \in [\tau_*,d)$ we let $k(\tau)$ be the unique
non-negative real number such that
\begin{equation} \label{e:k(a)defn}
\frac{2k(\tau)+d-\tau}{2k(\tau)+d+\tau-2}\bigg(\frac{1+(k(\tau)+1)(k(\tau)+d-1)}{1+k(\tau)(k(\tau)+d-2)}\bigg)^{\frac{\tau-1}{2}} = 1
\end{equation}
and let $k^*(\tau)$ denote the smallest integer greater than or
equal to $k(\tau)$. We show that $k(\tau)$ is well-defined during
the proof of the following (in Section \ref{section:computation}).
\begin{theorem} \label{t:betak}
Let $d \geq 2$, $\tau \in (1,d)$ and $\theta(\rho) =
(1+\rho)^{\frac{\tau - 1}{4}}$. Then the constants
$\mathbf{b}(d,\tau;\theta)$ and $\mathbf{B}(d,\tau;\theta)$ are
given by
\begin{table}[ht]
\begin{tabular}{ | c | c | c | }
\hline
$(d,\tau)$ & $\mathbf{b}(d,\tau;\theta)$ & $\mathbf{B}(d,\tau;\theta)$ \\ \hline \hline
$d=2,3$   & $\lim_{k \to \infty} \beta_k$ & $\beta_0$ \\ \hline
$d=4,\tau\in(1,2)$  & $\beta_0$ & $\lim_{k \to \infty} \beta_k$ \\ \hline
$d=4, \tau = 2$ & $\pi$ & $\pi$ \\ \hline
$d=4, \tau \in (2,4)$ & $\lim_{k \to \infty} \beta_k$ & $\beta_0$ \\ \hline
$d=5, \tau \in (1,\tau_*)$ & $\beta_0$ & $\lim_{k \to \infty} \beta_k$ \\ \hline
$d=5, \tau \in [\tau_*,\tau^*)$ & $\beta_{k^*(\tau)}$ & $\lim_{k \to \infty} \beta_k$ \\ \hline
$d=5, \tau \in [\tau^*,5)$ & $\beta_{k^*(\tau)}$ & $\beta_0$ \\ \hline
$d \geq 6, \tau \in (1,\tau_*)$ & $\beta_0$ & $\lim_{k \to \infty} \beta_k$ \\ \hline
$d \geq 6, \tau \in [\tau_*,\tau^*)$ & $\beta_1$  & $\lim_{k \to \infty} \beta_k$ \\ \hline
$d \geq 6, \tau \in [\tau^*,d)$ & $\beta_1$ & $\beta_0$ \\ \hline
\end{tabular}
\end{table}
\newline and the index sets $\mathbf{k}(d,\tau;\theta)$ and $\mathbf{K}(d,\tau;\theta)$ are
given by
\end{theorem}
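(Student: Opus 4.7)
The strategy is to study the sequence $(\beta_k)_{k\in\mathbb{N}_0}$ via its consecutive ratio. A direct computation using $\Gamma(x+1)=x\Gamma(x)$ yields
\[
\frac{\beta_{k+1}}{\beta_k} = \frac{2k+d-\tau}{2k+d+\tau-2}\left(\frac{1+(k+1)(k+d-1)}{1+k(k+d-2)}\right)^{\frac{\tau-1}{2}} =: f(k),
\]
a product of a factor strictly less than $1$ and one strictly greater than $1$ (the latter because $\tau>1$ and $(k+1)(k+d-1)-k(k+d-2)=2k+d-1>0$). Stirling's formula gives the finite positive limit
\[
\beta_\infty := \pi 2^{2-\tau}\Gamma(\tau-1)/\Gamma(\tau/2)^2,
\]
and an expansion in $1/k$ produces $f(k)=1-(\tau-1)^2/k^2+O(k^{-3})$, so $f(k)<1$ for all sufficiently large $k$; the sequence is therefore eventually strictly decreasing, with limit $\beta_\infty$.

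The two critical exponents admit transparent sequential meanings. The defining equation of $\tau_*$ is exactly $f(0)=1$, i.e., $\beta_1=\beta_0$, while the defining equation of $\tau^*$ is $\beta_0=\beta_\infty$, since $\beta_0/\beta_\infty = \Gamma(\frac{d-\tau}{2})/\Gamma(\frac{d+\tau}{2}-1)$. Likewise, $k(\tau)$ is the unique solution of $f(k)=1$ in $[0,\infty)$, so $k^*(\tau)=\lceil k(\tau)\rceil$ satisfies $\beta_{k^*(\tau)}\geq \beta_{k^*(\tau)-1}$ and $\beta_{k^*(\tau)}>\beta_{k^*(\tau)+1}$, identifying it as the location of the maximum whenever an interior maximum exists.

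Everything then reduces to a single structural dichotomy: either $(\beta_k)$ is monotonically decreasing (i.e.\ $f(k)<1$ for every $k\geq 0$) or it is strictly unimodal, increasing up to $k^*(\tau)$ and decreasing thereafter. In the decreasing case we obtain $\mathbf{B}=\beta_0$ with $\mathbf{K}=\{0\}$ and $\mathbf{b}=\beta_\infty$ with $\mathbf{k}=\emptyset$; in the unimodal case $\mathbf{B}=\beta_{k^*(\tau)}$ with $\mathbf{K}=\{k^*(\tau)\}$, while $\mathbf{b}=\min(\beta_0,\beta_\infty)$ with the corresponding $\mathbf{k}$ determined by comparing $\tau$ to $\tau^*$. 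For $d=2,3$ a direct sign analysis shows $f(0)<1$ throughout $(1,d)$, putting us in the monotone case; for $(d,\tau)=(4,2)$ the two factors in $f$ cancel exactly, giving $\beta_k\equiv\pi$ and explaining the exact identity; for $d\geq 6$ a computation shows $k(\tau)\leq 1$ over the entire unimodal range, so that $k^*(\tau)=1$ can be written explicitly in the table, whereas for $d=5$ the real parameter $k(\tau)$ need not be bounded and must be retained.

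The principal obstacle is the unimodality step: proving that $f(k)=1$ has at most one solution in $[0,\infty)$. I will establish this by analysing $\frac{d}{dk}\log f(k)$, a sum of simple rational expressions in $k$, and showing that the contributions from the two factors of $f$ compete in such a way that this derivative changes sign at most once on $[0,\infty)$. Once unimodality is in hand, the remaining nine subcases of the table are a routine verification of the signs of $f(0)-1$ (separating the dichotomy via $\tau_*$) and of $\beta_0-\beta_\infty$ (comparing $\tau$ to $\tau^*$), and the index sets $\mathbf{k}(d,\tau;\theta)$ and $\mathbf{K}(d,\tau;\theta)$ are read off immediately as a singleton (one of $0$, $1$, or $k^*(\tau)$) when the extremum is attained and as $\emptyset$ when it is only attained in the limit.
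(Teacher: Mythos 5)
Your proposal is built on an incorrect qualitative picture of the sequence, and the error traces back to the asymptotic expansion of the ratio. Expanding
\[
f(k) = \frac{2k+d-\tau}{2k+d+\tau-2}\left(\frac{1+(k+1)(k+d-1)}{1+k(k+d-2)}\right)^{\frac{\tau-1}{2}}
\]
carefully, the coefficient of $k^{-2}$ in $f(k)-1$ is
$(\tau-1)\bigl[-(\tau-1)+\tfrac{d+\tau-2}{2}+\tfrac{3-d}{2}+\tfrac{\tau-3}{2}\bigr]=0$: the $k^{-2}$ terms cancel exactly, so your claimed expansion $f(k)=1-(\tau-1)^2/k^2+O(k^{-3})$ is wrong, and the sign of $f(k)-1$ for large $k$ is decided only at order $k^{-3}$ and depends on $(d,\tau)$. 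Your conclusion that $(\beta_k)$ is \emph{eventually decreasing} for every $(d,\tau)$ contradicts the very table you are proving: the rows with $\mathbf{B}(d,\tau;\theta)=\lim_{k\to\infty}\beta_k$ and $\mathbf{K}=\emptyset$ (e.g.\ $d=4$, $\tau\in(1,2)$, or $d\geq5$, $\tau\in[\tau_*,\tau^*)$) force the sequence to increase to an unattained supremum. In fact for $d\geq5$ one has $f(k)>1$ for all large $k$ (the ratio decreases \emph{to} $1$ from above), so the correct shape in the non-monotone regime is decreasing down to a minimum at $k^*(\tau)$ and then increasing to the limit. Consequently your structural dichotomy is inverted: $\beta_{k^*(\tau)}$ is the \emph{infimum} (the table lists it under $\mathbf{b}$, with $\mathbf{B}=\max\{\beta_0,\lim_k\beta_k\}$ decided by $\tau^*$), whereas you assign it to $\mathbf{B}$ and set $\mathbf{b}=\min(\beta_0,\beta_\infty)$ — exactly backwards. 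Your dichotomy also has no branch for the monotonically increasing cases ($d=4$, $\tau\in(1,2)$ and $d\geq5$, $\tau\in(1,\tau_*)$).

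Two further points. First, the unimodality of $f$ (at most one solution of $f(k)=1$ on $[0,\infty)$) is correctly identified as the key structural lemma, and your plan of differentiating $\log f$ is workable — the paper does essentially this, writing $\partial_k f=-A(B_0+B_1k+B_2k^2)$ with $A>0$ and showing $B_1,B_2>0$ for $d\geq5$, so that $f$ has at most one interior stationary point, necessarily a maximum. But you must then combine this with the \emph{correct} large-$k$ behaviour to conclude that $f=1$ has a unique root, with $f<1$ before it and $f>1$ after it. Second, the claim that $k(\tau)<1$ for all $d\geq6$ and $\tau\in(\tau_*,d)$, which is what lets you replace $k^*(\tau)$ by $1$ in the table, is asserted without proof; it is genuinely nontrivial (equivalent to $f(1,\tau)>1$ on that range) and in the paper it occupies the bulk of the argument, via explicit bounds on the interval where $B_0(d,\tau)<0$ and an induction on the dimension. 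As it stands, the proposal would produce a table with the roles of $\mathbf{b}$ and $\mathbf{B}$ interchanged in all the $d\geq5$ rows, so the approach must be corrected at the level of the asymptotics before any of the case analysis can go through.
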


\begin{table}[ht]
\begin{tabular}{ | c | c | c | }
\hline
$(d,\tau)$ & $\mathbf{k}(d,\tau;\theta)$ & $\mathbf{K}(d,\tau;\theta)$ \\ \hline \hline
$d=2,3$   & $\emptyset$ & \{0\} \\ \hline
$d=4,\tau\in(1,2)$  & \{0\} & $\emptyset$ \\ \hline
$d=4, \tau = 2$ & $\mathbb{N}_0$ & $\mathbb{N}_0$ \\ \hline
$d=4, \tau \in (2,4)$ & $\emptyset$ & \{0\} \\ \hline
$d=5, \tau \in (1,\tau_*)$ & \{0\} & $\emptyset$ \\ \hline
$d=5, \tau \in [\tau_*,\tau^*), k(\tau) \notin \mathbb{N}_0$ & $\{k^*(\tau)\}$ & $\emptyset$ \\ \hline
$d=5, \tau \in [\tau_*,\tau^*), k(\tau)  \in \mathbb{N}_0$ & $\{k^*(\tau),k^*(\tau)+1\}$ & $\emptyset$ \\ \hline
$d=5, \tau \in [\tau^*,5), k(\tau) \notin \mathbb{N}_0$ & $\{k^*(\tau)\}$ & \{0\} \\ \hline
$d=5, \tau \in [\tau^*,5), k(\tau)  \in \mathbb{N}_0$ & $\{k^*(\tau),k^*(\tau)+1\}$ & \{0\} \\ \hline
$d \geq 6, \tau \in (1,\tau_*)$ & \{0\} & $\emptyset$ \\ \hline
$d \geq 6, \tau = \tau_*$ & \{0,1\} & $\emptyset$ \\ \hline
$d \geq 6, \tau \in (\tau_*,\tau^*)$ & \{1\}  & $\emptyset$ \\ \hline
$d \geq 6, \tau \in [\tau^*,d)$ & \{1\} & \{0\} \\ \hline
\end{tabular}
\end{table}

We can combine Theorems \ref{t:Main} and \ref{t:betak} and give a
precise description of the optimal constants and extremisers in
\eqref{e:Hoshiro} and \eqref{e:Hoshiroreverse}.
\begin{notation}
For $d \geq 2$ and $s \in (-\frac{1}{2},\frac{d}{2}-1)$, define
constants $\mathbf{c}(d,s)$ and $\mathbf{C}(d,s)$ by
\begin{equation*}
\mathbf{c}(d,s) = \inf_{k \in \mathbb{N}_0} \pi 2^{-2s} \frac{\Gamma(1+2s)\Gamma(k+\tfrac{d-2}{2}-s)}
{\Gamma(1+s)^2\Gamma(k+\tfrac{d}{2}+s)} \,(1+k(k+d-2))^{s+\frac{1}{2}}
\end{equation*}
and
\begin{equation*}
\mathbf{C}(d,s) = \sup_{k \in \mathbb{N}_0} \pi 2^{-2s} \frac{\Gamma(1+2s)\Gamma(k+\tfrac{d-2}{2}-s)}
{\Gamma(1+s)^2\Gamma(k+\tfrac{d}{2}+s)} \,(1+k(k+d-2))^{s+\frac{1}{2}}.
\end{equation*}
\end{notation}
Observe that
\begin{equation} \label{e:cblink}
\mathbf{c}(d,s) = \mathbf{b}(d,2(1+s);\theta)
\end{equation}
and
\begin{equation} \label{e:CBlink}
\mathbf{C}(d,s) = \mathbf{B}(d,2(1+s);\theta)
\end{equation}
where $\theta$ is given by
\begin{equation*}
  \theta(\rho) = (1+\rho)^{\frac{1+2s}{4}}.
\end{equation*}
\begin{corollary} \label{c:S}
Let $d \geq 2$, $s \in (-\frac{1}{2},\frac{d}{2}-1)$ and suppose that $i\partial_t u + \frac{1}{2}\Delta u = 0$ on $\mathbb{R}^{d+1}$. Then
\[
\mathbf{c}(d,s) \|u(0)\|_{\dot{H}^s(\mathbb{R}^d)}^2 \leq \int_{\mathbb{R}} \int_{\mathbb{R}^d}
|(1-\Lambda)^{\frac{1+2s}{4}}u(x,t)|^2 \, \frac{\mathrm{d}x\mathrm{d}t}{|x|^{2(1+s)}}  \leq \mathbf{C}(d,s) \|u(0)\|_{\dot{H}^s(\mathbb{R}^d)}^2,
\]
and the constants are optimal.
\end{corollary}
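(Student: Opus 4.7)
The plan is to obtain Corollary \ref{c:S} as a direct specialisation of Theorem \ref{t:Main}. I take $\phi(\rho) = \tfrac{1}{2}\rho^{2}$, so that $i\partial_t u + \phi(|\nabla|)u = 0$ is the free Schr\"odinger equation, together with $\tau = 2(1+s)$ and $\theta(\rho) = (1+\rho)^{(1+2s)/4} = (1+\rho)^{(\tau-1)/4}$. The hypothesis $s \in (-\tfrac{1}{2}, \tfrac{d}{2}-1)$ translates precisely to $\tau \in (1,d)$, as required by Theorem \ref{t:Main}, and the constraint \eqref{e:phipsi} becomes $\psi(\rho)^{2} = \rho\cdot\rho^{1-2(1+s)} = \rho^{-2s}$, so $\psi(\rho) = \rho^{-s}$.

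Next, I convert the $L^{2}$-control on the right-hand side of Theorem \ref{t:Main} into the desired $\dot{H}^{s}$-control. Since the Schr\"odinger propagator commutes with $|\nabla|^{s}$, for $u$ as in the corollary the function $v := |\nabla|^{s}u$ is again a Schr\"odinger solution, and $\|v(0)\|_{L^{2}}^{2} = \|u(0)\|_{\dot{H}^{s}}^{2}$. Applying Theorem \ref{t:Main} to $v$, the integrand on the left-hand side becomes
\[
|\psi(|\nabla|)\,\theta(-\Lambda)\,v|^{2} = \bigl||\nabla|^{-s}\,\theta(-\Lambda)\,|\nabla|^{s}\,u\bigr|^{2} = |\theta(-\Lambda)\,u|^{2},
\]
since $\theta(-\Lambda)$ is an angular operator and therefore commutes with every radial Fourier multiplier; this is transparent from the decomposition into the spaces $\mathfrak{H}_{k}$ appearing in the statement of Theorem \ref{t:Main}, on each of which both $|\nabla|^{s}$ and $\theta(-\Lambda)$ act diagonally.

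It remains only to verify the identifications \eqref{e:cblink}--\eqref{e:CBlink}, already noted in the text, namely that $\inf_{k\in\N_{0}}\beta_{k} = \mathbf{c}(d,s)$ and $\sup_{k\in\N_{0}}\beta_{k} = \mathbf{C}(d,s)$. With the above choices one has $2^{2-\tau} = 2^{-2s}$, $\Gamma(\tau-1) = \Gamma(1+2s)$, $\Gamma(\tau/2)^{2} = \Gamma(1+s)^{2}$, $\tfrac{d-\tau}{2} = \tfrac{d-2}{2}-s$, $\tfrac{d+\tau}{2}-1 = \tfrac{d}{2}+s$, and $|\theta(k(k+d-2))|^{2} = (1+k(k+d-2))^{s+1/2}$, so the sequence $\beta_{k}$ coincides term-by-term with the expression under the inf/sup in the Notation. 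The argument carries no real obstacle: the substantive content of Corollary \ref{c:S} is entirely supplied by Theorem \ref{t:Main}, and the only genuine point to confirm is the commutation of the radial multiplier $|\nabla|^{-s}$ with the tangential operator $\theta(-\Lambda)$.
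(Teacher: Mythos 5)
Your proposal is correct and follows the same route as the paper, which simply specialises Theorem \ref{t:Main} with $\phi(\rho)=\tfrac12\rho^2$, $\psi(\rho)=\rho^{-s}$ and $\tau=2(1+s)$. The extra steps you spell out (replacing $u$ by $|\nabla|^s u$ to pass from the $L^2$ norm to the $\dot H^s$ norm, and the commutation of $|\nabla|^{-s}$ with $\theta(-\Lambda)$, which the paper justifies via Proposition \ref{p:commute}) are exactly what is implicit in the paper's ``follows immediately''.
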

Likewise, for the wave and Klein--Gordon equations we have the
following.
\begin{corollary} \label{c:W}
Let $d \geq 2$, $s \in (0,\frac{d-1}{2})$ and suppose that
$\partial_{tt} u - \Delta u = 0$ on $\mathbb{R}^{d+1}$. Then
\begin{equation*}
 \mathbf{c}(d,s-\tfrac{1}{2}) \|(u(0),\partial_tu(0))\|^2 \leq
  2\int_\mathbb{R} \int_{\mathbb{R}^d} |(1-\Lambda)^{\frac{s}{2}}u(x,t)|^2 \, \frac{\mathrm{d}x\mathrm{d}t}{|x|^{1+2s}} \leq
  \mathbf{C}(d,s-\tfrac{1}{2}) \|(u(0),\partial_tu(0))\|^2
\end{equation*}
and the constants are optimal. Here, the norm on the initial data is
given by
\[
\|(u(0),\partial_tu(0))\|^2 = \|u(0)\|_{\dot{H}^s(\mathbb{R}^d)}^2 + \|\partial_tu(0)\|_{\dot{H}^{s-1}(\mathbb{R}^d)}^2.
\]
\end{corollary}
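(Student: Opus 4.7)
The plan is to reduce Corollary \ref{c:W} to two applications of Theorem \ref{t:Main} by splitting the wave solution into its two half-wave components, showing that the cross term in the weighted integral vanishes by a Fourier-support argument, and then invoking the parallelogram identity to recover the wave energy.

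First, write $u = u_+ + u_-$ where $u_\pm(x,t) = e^{\pm it|\nabla|} f_\pm$ with $f_\pm = \tfrac{1}{2}\bigl(u(0) \mp i |\nabla|^{-1}\partial_t u(0)\bigr)$. Each half-wave satisfies $i\partial_t u_\pm \mp |\nabla| u_\pm = 0$, which fits the setup of Theorem \ref{t:Main} with $\phi(\rho) = \mp \rho$, $\tau = 1+2s$ (so $1 < \tau < d$ matches the range of $s$), and $\psi(\rho) = \rho^{(1-\tau)/2} = \rho^{-s}$ since $|\phi'(\rho)| = 1$. With the choice $\theta(\rho) = (1+\rho)^{(\tau-1)/4} = (1+\rho)^{s/2}$, applying Theorem \ref{t:Main} to the rescaled half-wave $|\nabla|^{s} u_\pm$ (whose initial datum $|\nabla|^s f_\pm$ has $L^2$-norm $\|f_\pm\|_{\dot{H}^s}$) yields
\[
\mathbf{c}(d,s-\tfrac{1}{2})\,\|f_\pm\|_{\dot{H}^s}^2 \;\leq\; \int_\R\!\int_{\R^d} |(1-\Lambda)^{s/2} u_\pm|^2\, \frac{\mathrm{d}x\,\mathrm{d}t}{|x|^{1+2s}} \;\leq\; \mathbf{C}(d,s-\tfrac{1}{2})\,\|f_\pm\|_{\dot{H}^s}^2,
\]
where the identifications \eqref{e:cblink}--\eqref{e:CBlink} have been used to convert $\mathbf{b}(d,1+2s;\theta)$ and $\mathbf{B}(d,1+2s;\theta)$ into $\mathbf{c}(d,s-\tfrac{1}{2})$ and $\mathbf{C}(d,s-\tfrac{1}{2})$ respectively.

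Second, the cross term vanishes. For each fixed $x$, Plancherel in the time variable gives
\[
\int_\R u_+(x,t)\overline{u_-(x,t)}\,\mathrm{d}t = \frac{1}{2\pi}\int_\R \widetilde{u_+}(x,\tau)\,\overline{\widetilde{u_-}(x,\tau)}\,\mathrm{d}\tau = 0,
\]
because the time Fourier transforms $\widetilde{u_\pm}(x,\tau)$ are supported on $\{\pm\tau > 0\}$ respectively, and these sets are disjoint. The operator $(1-\Lambda)^{s/2}$ acts only on the angular variables and therefore commutes with this decomposition, so by Fubini the weighted integral splits as
\[
\int_\R\!\int_{\R^d} |(1-\Lambda)^{s/2} u|^2\,\frac{\mathrm{d}x\,\mathrm{d}t}{|x|^{1+2s}} \;=\; \sum_{\pm}\int_\R\!\int_{\R^d} |(1-\Lambda)^{s/2} u_\pm|^2\,\frac{\mathrm{d}x\,\mathrm{d}t}{|x|^{1+2s}}.
\]

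Third, combine the two half-wave estimates with the parallelogram identity
\[
\|f_+\|_{\dot{H}^s}^2 + \|f_-\|_{\dot{H}^s}^2 \;=\; \tfrac{1}{2}\bigl(\|u(0)\|_{\dot{H}^s}^2 + \|\partial_t u(0)\|_{\dot{H}^{s-1}}^2\bigr),
\]
which follows from $f_+ + f_- = u(0)$ and $|\nabla|(f_+ - f_-) = i\partial_t u(0)$. This produces the stated inequalities with the factor of $2$. Optimality is immediate: choosing $f_- = 0$ (respectively $f_+ = 0$) reduces the problem to a single half-wave, for which Theorem \ref{t:Main} already furnishes the sharp constants $\mathbf{c}(d,s-\tfrac{1}{2})$ and $\mathbf{C}(d,s-\tfrac{1}{2})$, and the extremisers for these half-wave estimates transport to extremisers here.

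The only delicate point is the vanishing of the cross term, which requires interpreting the Fourier-support argument for general $\dot{H}^s$ data; this is routine via a density argument from Schwartz data, where the disjoint-support computation is justified without distributions.
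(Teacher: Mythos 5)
Your proof is correct and follows essentially the same route as the paper: the half-wave decomposition $u = u_+ + u_-$ with $\phi(\rho)=\pm\rho$, $\psi(\rho)=\rho^{-s}$, $\tau=1+2s$, orthogonality of the two pieces via disjoint time-frequency supports, two applications of Theorem \ref{t:Main}, and the parallelogram law. The only blemish is a harmless sign slip in writing the half-wave equations ($u_\pm = e^{\pm it|\nabla|}f_\pm$ satisfies $i\partial_t u_\pm \pm |\nabla|u_\pm = 0$), which is immaterial since $|\phi'(\rho)|=1$ in either case.
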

\begin{corollary} \label{c:KG}
Let $d \geq 2$, $s \in (-\frac{1}{2},\frac{d}{2}-1)$ and suppose
that $\partial_{tt} u - \Delta u + u = 0$ on $\mathbb{R}^{d+1}$.
Then
\[
\mathbf{c}(d,s) \|(u(0),\partial_tu(0))\|^2 \leq
2\int_{\mathbb{R}} \int_{\mathbb{R}^d} |(1-\Delta)^{\frac{1}{4}}(1-\Lambda)^{\frac{1+2s}{4}}u(x,t)|^2 \, \frac{\mathrm{d}x\mathrm{d}t}{|x|^{2(1+s)}}
\leq \mathbf{C}(d,s) \|(u(0),\partial_tu(0))\|^2
\]
and the constants are optimal. Here, the norm on the initial data is
given by
\[
\|(u(0),\partial_tu(0))\|^2 = \|u(0)\|_{\dot{H}^s(\mathbb{R}^d)}^2 +
\|u(0)\|_{\dot{H}^{s+1}(\mathbb{R}^d)}^2 +
\|\partial_tu(0)\|_{\dot{H}^{s}(\mathbb{R}^d)}^2.
\]
\end{corollary}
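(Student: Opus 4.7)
The plan is to reduce Corollary~\ref{c:KG} to two applications of Theorem~\ref{t:Main} by splitting each Klein--Gordon solution into half-wave components. Writing $\langle\nabla\rangle = (1-\Delta)^{1/2}$ and $u = u_+ + u_-$ with
\[
u_\pm(t) = e^{\mp it\langle\nabla\rangle}u_\pm(0), \qquad u_\pm(0) = \tfrac{1}{2}\bigl(u(0) \pm i\langle\nabla\rangle^{-1}\partial_t u(0)\bigr),
\]
each $u_\pm$ satisfies $i\partial_t u_\pm + \phi(|\nabla|) u_\pm = 0$ with $\phi(\rho) = \mp\sqrt{1+\rho^2}$, so that $|\phi'(\rho)| = \rho\langle\rho\rangle^{-1}$. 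With $\tau = 2(1+s)$, the relation \eqref{e:phipsi} gives $\psi(\rho) = \rho^{-s}\langle\rho\rangle^{-1/2}$, i.e.\ $\psi(|\nabla|) = |\nabla|^{-s}(1-\Delta)^{-1/4}$, while the choice $\theta(\rho) = (1+\rho)^{(1+2s)/4}$ matches the setting of Section~\ref{section:explicit}.

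Because $|\nabla|^s(1-\Delta)^{1/2}$ is a radial Fourier multiplier that commutes with the half-wave propagator and with $(1-\Lambda)^{\alpha}$, I would apply Theorem~\ref{t:Main} not to $u_\pm$ but to the rescaled solution $v_\pm := |\nabla|^s (1-\Delta)^{1/2} u_\pm$, which solves the same half-wave equation. A direct computation then gives
\[
\psi(|\nabla|)\theta(-\Lambda) v_\pm = (1-\Delta)^{\frac{1}{4}}(1-\Lambda)^{\frac{1+2s}{4}} u_\pm,
\]
exactly the operator in Corollary~\ref{c:KG}, and by Plancherel
\[
\|v_\pm(0)\|_{L^2(\mathbb{R}^d)}^2 = \|u_\pm(0)\|_{\dot{H}^s(\mathbb{R}^d)}^2 + \|u_\pm(0)\|_{\dot{H}^{s+1}(\mathbb{R}^d)}^2 =: A_\pm.
\]
Combined with the identifications \eqref{e:cblink} and \eqref{e:CBlink}, Theorem~\ref{t:Main} yields the bilateral estimate $\mathbf{c}(d,s)A_\pm \leq I_\pm \leq \mathbf{C}(d,s)A_\pm$, where $I_\pm$ denotes the weighted integral of $|(1-\Delta)^{1/4}(1-\Lambda)^{(1+2s)/4}u_\pm|^2$ against $|x|^{-2(1+s)}\mathrm{d}x\mathrm{d}t$.

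To sum these into the statement, I would show that $u_+$ and $u_-$ contribute no cross term to the weighted integral. The space-time Fourier transform of $u_\pm$ is concentrated on the hyperboloid $\omega = \mp\langle\xi\rangle$, and the purely spatial operator $(1-\Delta)^{1/4}(1-\Lambda)^{(1+2s)/4}$ preserves this time-frequency support, so Plancherel in $t$ at each fixed $x$ annihilates the cross term when integrated against the $t$-independent weight $|x|^{-2(1+s)}$. Finally, applying the parallelogram identity in $\dot{H}^s$ and $\dot{H}^{s+1}$ separately and using the pointwise cancellation $(1+|\xi|^2)^{-1}(|\xi|^{2s} + |\xi|^{2s+2}) = |\xi|^{2s}$ yields
\[
A_+ + A_- = \tfrac{1}{2}\bigl(\|u(0)\|_{\dot{H}^s}^2 + \|u(0)\|_{\dot{H}^{s+1}}^2 + \|\partial_t u(0)\|_{\dot{H}^s}^2\bigr) = \tfrac{1}{2}\|(u(0),\partial_t u(0))\|^2,
\]
so multiplying the summed bilateral estimate by $2$ produces Corollary~\ref{c:KG}. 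Optimality of $\mathbf{c}(d,s)$ and $\mathbf{C}(d,s)$ is inherited from Theorem~\ref{t:Main} by choosing data with $\partial_t u(0) = \mp i\langle\nabla\rangle u(0)$, which kills one of $u_\pm(0)$ and reduces the estimate to a single half-wave where sharpness is already known. The main obstacle is verifying the time-frequency orthogonality rigorously, but this is routine given that the weight is independent of $t$.
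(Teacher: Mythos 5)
Your proposal is correct and follows essentially the same route as the paper: the half-wave splitting $u=u_++u_-$ with $\phi(\rho)=\mp(1+\rho^2)^{1/2}$, $\psi(\rho)=(1+\rho^2)^{-1/4}\rho^{-s}$, $\tau=2(1+s)$, disjointness of temporal Fourier supports to kill the cross term, two applications of Theorem \ref{t:Main}, and the parallelogram law. The paper merely states that the Klein--Gordon case is "similar" to the wave case and omits the details; your substitution $v_\pm=|\nabla|^s(1-\Delta)^{1/2}u_\pm$ and the cancellation $(1+|\xi|^2)^{-1}(|\xi|^{2s}+|\xi|^{2s+2})=|\xi|^{2s}$ correctly supply those omitted details.
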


We remark that Theorem \ref{t:exactidentity} is a straightforward consequence
of Corollaries \ref{c:S}--\ref{c:KG} and Theorem \ref{t:betak} to obtain $C(4,0)=c(4,0)=\pi$.

\begin{proof}[Proof of Corollaries \ref{c:S}, \ref{c:W} and
\ref{c:KG}] Corollary \ref{c:S} follows immediately from Theorem
\ref{t:Main} by taking $\phi(\rho) = \frac{1}{2}\rho^2$, $\psi(\rho)
= \rho^{-s}$ and $\tau = 2(1+s)$, clearly satisfying
\eqref{e:phipsi}. For Corollary \ref{c:W}, we write the solution of
the wave equation $u$ as $u_+ + u_-$, where
\[
u_\pm(t) = \exp(\pm it|\nabla|)f_\pm
\]
and
\[
u(0) = f_+ + f_- \qquad \text{and} \qquad \partial_tu(0) = i|\nabla|(f_+ - f_-).
\]
Then
\[
\int_\mathbb{R} \int_{\mathbb{R}^d} |(1-\Lambda)^{\frac{s}{2}}u(x,t)|^2 \, \frac{\mathrm{d}x\mathrm{d}t}{|x|^{1+2s}} =
\int_\mathbb{R} \int_{\mathbb{R}^d} |(1-\Lambda)^{\frac{s}{2}}u_+(x,t)|^2 \, \frac{\mathrm{d}x\mathrm{d}t}{|x|^{1+2s}} +
\int_\mathbb{R} \int_{\mathbb{R}^d} |(1-\Lambda)^{\frac{s}{2}}u_-(x,t)|^2 \, \frac{\mathrm{d}x\mathrm{d}t}{|x|^{1+2s}}
\]
by Plancherel's theorem and the fact that the Fourier transforms in
time of $u_+$ and $u_-$ are disjoint. Corollary \ref{c:W} now
follows from two applications of Theorem \ref{t:Main}, with
$\phi(\rho) = \pm \rho$, $\psi(\rho) = \rho^{-s}$ and $\tau = 1+2s$,
and the parallelogram law. The proof of Corollary \ref{c:KG} is
similar, using $\phi(\rho) = \pm(1+\rho^2)^{1/2}$, $\psi(\rho) =
(1+\rho^2)^{-\frac{1}{4}}\rho^{-s}$ and $\tau = 2(1+s)$, and we omit
the details.
\end{proof}
Of course, Theorem \ref{t:betak} provides a precise description of
the optimal constants $\mathbf{c}(d,s)$ and $\mathbf{C}(d,s)$
appearing in Corollaries \ref{c:S}, \ref{c:W} and \ref{c:KG}
(through \eqref{e:cblink} and \eqref{e:CBlink}). The constants
$\beta_0$, $\beta_1$ and $\lim_{k \to \infty} \beta_k$ appearing in
Theorem \ref{t:betak} are given explicitly in terms of $d$ and
$\tau$ as follows
\begin{align}
\beta_0 & = \pi 2^{2-\tau} \frac{\Gamma(\tau - 1)\Gamma(\tfrac{d-\tau}{2})}
{\Gamma(\tfrac{\tau}{2})^2\Gamma(\tfrac{d+\tau}{2}-1)}  \label{e:alpha0} \\
\beta_1 & = \pi 2^{2-\tau} d^{\frac{\tau - 1}{2}} \frac{\Gamma(\tau - 1)\Gamma(1+\tfrac{d-\tau}{2})}
{\Gamma(\tfrac{\tau}{2})^2\Gamma(\tfrac{d+\tau}{2})}  \\
\lim_{k \to \infty} \beta_k & = \pi 2^{2-\tau} \frac{\Gamma(\tau - 1)}{\Gamma(\frac{\tau}{2})^2}\,, \label{e:alphaklimit}
\end{align}
where \eqref{e:alphaklimit} follows easily from Stirling's formula.
In the exceptional case $d=5$ and $\tau \in (\tau_*,5)$ the lower
bound $\mathbf{b}(5,\tau;\theta)$ is given in terms of $k(\tau)$
which is implicitly defined. We are, at least, able to provide the
following bounds on $k(\tau)$.
\begin{proposition} \label{p:growth}
Let $d=5$ and $\tau \in (\tau_*,5)$. The unique positive real number $k(\tau)$
for which \eqref{e:k(a)defn} holds satisfies the following bounds
\begin{equation*}
\frac{C_1}{(5-\tau)^{1/4}} \leq k(\tau) \leq \frac{C_2}{(5-\tau)^{1/2}}
\end{equation*}
for some positive constants $C_1$ and $C_2$.
\end{proposition}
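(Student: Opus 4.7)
The plan is to test the defining equation
\[
F(k,\tau) := \frac{2k+5-\tau}{2k+3+\tau}\p{\frac{1+(k+1)(k+4)}{1+k(k+3)}}^{(\tau-1)/2} = 1
\]
at two carefully chosen values $k_0$, exploiting the fact (established in the course of proving Theorem~\ref{t:betak}) that $k(\tau)$ is the unique positive root. Since $F(0,\tau) < 1$ for $\tau \in (\tau_*,5)$ (by the definition of $\tau_*$ together with uniqueness) and $F(k,\tau) \to 1^+$ as $k \to \infty$ for $\tau < 5$ (a by-product of Step~2 below), uniqueness and continuity force $F(k_0,\tau) > 1 \Leftrightarrow k_0 > k(\tau)$, so each bound reduces to a sign determination.

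The key analytic input is an asymptotic expansion of $\log F$ near the degenerate point $(\tau, k) = (5,\infty)$. In Step~1, direct algebra at $\tau = 5$ gives the exact identity
\[
F(k,5) - 1 = -\frac{4}{(k+4)(k^2+3k+1)^2},
\]
so that $\log F(k,5) = -4/k^5 + O(k^{-6})$ and the coefficients of $k^{-j}$ for $j = 1,2,3,4$ all vanish. In Step~2, one differentiates
\[
\partial_\tau \log F = -\frac{4k+8}{(2k+5-\tau)(2k+3+\tau)} + \tfrac12 \log \frac{1+(k+1)(k+4)}{1+k(k+3)}
\]
and performs a careful series manipulation to show that the $k^{-1}$ and $k^{-2}$ coefficients also cancel at $\tau = 5$, leaving $\partial_\tau \log F(k,5) = -8/(3k^3) + O(k^{-4})$. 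Writing $\e = 5-\tau$ and combining these gives
\[
\log F(k,\tau) = \frac{8\e}{3k^3} - \frac{4}{k^5} + O\!\p{\tfrac{\e}{k^4}} + O\!\p{\tfrac{\e^2}{k^3}} + O\!\p{\tfrac{1}{k^6}}
\]
as $k \to \infty$ and $\e \to 0^+$.

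In Step~3 one substitutes the test values. Setting $k_0 = C_2/\sqrt{\e}$ yields $\log F(k_0,\tau) = \e^{5/2}\p{\tfrac{8}{3C_2^3} - \tfrac{4}{C_2^5}} + o(\e^{5/2})$, strictly positive whenever $C_2 > \sqrt{3/2}$, which gives the upper bound. Setting $k_0 = C_1/\e^{1/4}$ yields $\log F(k_0,\tau) = \tfrac{8\e^{7/4}}{3C_1^3} - \tfrac{4\e^{5/4}}{C_1^5} + o(\e^{5/4})$, in which the negative $\e^{5/4}$ term dominates the positive $\e^{7/4}$ correction as $\e \to 0^+$ for any $C_1 > 0$, giving the lower bound. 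The main obstacle is Step~2 — the bookkeeping to verify that every $k^{-j}$ coefficient up through $j = 4$ cancels at $\e = 0$ and that the $k^{-1}, k^{-2}$ coefficients of $\partial_\tau \log F(k,5)$ cancel, so that the leading balance $\e/k^3 \sim 1/k^5$ (which pins the true asymptotic $k(\tau) \sim \sqrt{3/(2\e)}$) emerges; the stated upper bound and the slacker lower bound are then immediate consequences in the asymptotic regime.
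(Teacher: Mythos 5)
Your argument is correct, but it takes a genuinely different route from the paper's. The paper proves the lower bound by a Mean Value Theorem argument in the $\tau$-variable: it bounds $\tfrac{\partial}{\partial\varepsilon}h(k,5-2\varepsilon)$ crudely by an explicit rational function of $k$, computes $1-h(k,5)=4/((k+4)(k^2+3k+1)^2)$ exactly (your Step 1), and concludes that $h(k,\tau)<1$ whenever $5-\tau\lesssim (k+1)^{-4}$; the upper bound then comes almost for free from the observation that $k(\tau)$ cannot exceed the explicitly computable stationary point of $h(\cdot,\tau)$, namely $-2+\sqrt{(5-\tau(2-\tau))/((5-\tau)(3+\tau))}=O((5-\tau)^{-1/2})$. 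Your route instead develops the two-term asymptotic $\log F(k,\tau)=\tfrac{8\varepsilon}{3k^3}-\tfrac{4}{k^5}+(\text{errors})$ near $(k,\tau)=(\infty,5)$ and tests it at the two scales; I have checked the cancellations you rely on, in particular $\partial_\tau\log F(k,5)=-\tfrac{8}{3k^3}+O(k^{-4})$, and the uniform error bounds (the second $\tau$-derivative of $\log F$ is $O(k^{-3})$), and they all hold. This costs more bookkeeping than the paper's proof but buys strictly more: the balance $8\varepsilon/(3k^3)=4/k^5$ identifies the exact rate $k(\tau)\sim\sqrt{3/(2(5-\tau))}$, so the same test-point argument with $k_0=C_1(5-\tau)^{-1/2}$, $C_1<\sqrt{3/2}$, yields the sharper lower bound $k(\tau)\geq C_1(5-\tau)^{-1/2}$ and thus answers the question the authors pose immediately after the proposition about closing the gap between the exponents $\tfrac14$ and $\tfrac12$. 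One caveat, which your proof shares with the paper's: both arguments establish the stated inequalities only for $\tau$ near $5$ (note $k(\tau)\to0$ as $\tau\to\tau_*^+$, so the lower bound must be read asymptotically as $\tau\to5^-$), so you should state explicitly that the regime $\tau\in(\tau_*,5-\delta]$ for the upper bound is disposed of by enlarging $C_2$, using that $k(\tau)$ is bounded there.
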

We have not attempted to sharpen these bounds by bringing the
exponents $\frac{1}{4}$ and $\frac{1}{2}$ closer together, although
this would be an interesting problem to solve.

Additionally, Theorem \ref{t:betak} allows one to characterise the
space of extremising initial data in Corollaries \ref{c:S},
\ref{c:W} and \ref{c:KG}. For example, in spatial dimensions $d=2,
3$, and any $\tau \in (1,d)$, we know that the lower bounds do not
possess extremising initial data, and the upper bounds are realised
if and only if the initial data is radially symmetric.

Clearly, the case of five spatial dimensions is the most subtle in
Theorem \ref{t:betak}. Although we cannot provide a concrete
explanation for this, it is conceivable this is related to the
amusing fact that the volume of the unit sphere as a function of the
dimension has a global maximum in five dimensions.

As promised, we end this section with a justification that the
parameters $\tau_*$ and $\tau^*$ are well-defined.
\begin{proof}[Proof that $\tau_*$ is well-defined]
Recall that we are assuming $d \geq 5$. Observe that
\begin{equation*}
\Phi(\tau) := \frac{\partial}{\partial \tau} \bigg(d^{\frac{\tau-1}{2}} \frac{\frac{d-\tau}{2}}{\frac{d+\tau}{2}-1}\bigg)
=\frac{d^{\frac{\tau-1}{2}}}{2(d-2+\tau)^2}\left((\log d)(d-\tau)(d-2+\tau) - 4(d-1)\right)
\end{equation*}
has at most two roots. These roots are given by
\[
\tau = 1 \pm \sqrt{1 + d(d-2) - \frac{4(d-1)}{\log d}}
\]
and therefore at most one of these roots lies in $(1,d)$. Furthermore
\[
\Phi(1) = \frac{1}{2(d-1)}((\log d)(d-1)-4) > 0
\]
for $d \geq 5$, and
\[
\Phi(d) = -\frac{d^{\frac{d-1}{2}}}{2(d-1)} < 0
\]
and it follows that there is precisely one root of $\Phi$ in the
interval $(1,d)$. Clearly
$$
d^{\frac{\tau-1}{2}} \frac{\frac{d-\tau}{2}}{\frac{d+\tau}{2}-1} = \left\{\begin{array}{llll}
1 & \text{if $\tau = 1$} \\
0 & \text{if $\tau = d$}
\end{array}  \right.
$$
and it follows that $\tau_*$ exists and is unique.
\end{proof}

\begin{proof}[Proof that $\tau^*$ is well-defined]
Again, here we are only considering $d \geq 5$. Let
$$
\Upsilon(t) = \frac{\Gamma(t)}{\Gamma(d-1-t)}
$$
for $t \in (0,\frac{d-1}{2})$. Then, of course,
$$
\frac{\Gamma(\frac{d-\tau}{2})}{\Gamma(\frac{d+\tau}{2}-1)} = \Upsilon(t)
$$
if $t = \frac{d-\tau}{2}$ (and note that $t \in (0,\frac{d-1}{2})$
if and only if $\tau \in (1,d)$). So, it suffices to show that there
exists a unique $t^* \in (0,\frac{d-1}{2})$ such that $\Upsilon(t^*)
= 1$.

To this end, we observe that $\Upsilon$ is log-convex on
$(0,\frac{d-1}2)$ because
$$
(\log \Upsilon)'(t) = \psi(t) + \psi(d-1-t)
$$
where $\psi := (\log \Gamma)'$ is the digamma function. We note that
\begin{equation} \label{e:digammaprops}
\psi(t) = -\gamma-\frac{1}{t}+t \sum_{j = 1}^\infty \frac{1}{j(t+j)}
\qquad \text{and}
\qquad \psi'(t) = \sum_{j = 0}^\infty \frac{1}{(t+j)^2},
\end{equation}
where
$$
\gamma=\lim_{m\to\infty}\left\{\frac{1}{1}+\frac{1}{2}+\cdots+\frac{1}{m}
-\log m\right\}=0.5772157\ldots
$$
(see Whittaker--Watson \cite[Section 12.16]{WW}) and hence
 $\psi'$ is a decreasing function on $(0,\infty)$.
Then
$$
(\log \Upsilon)''(t) = \psi'(t) - \psi'(d-1-t) > 0
$$
because we have $t < \frac{d-1}{2}$, giving the claimed
log-convexity of $\Upsilon$.

So, in particular, $\Upsilon$ must be convex on $(0,\frac{d-1}{2})$. We
have $\lim_{t \to 0+} \Upsilon(t) = +\infty$ and at the other
endpoint, we have $\Upsilon(\frac{d-1}{2}) = 1$. Also,
$$
\Upsilon'(t) = \Upsilon(t) (\psi(t) + \psi(d-1-t)).
$$
It can be shown from \eqref{e:digammaprops} that $\psi(\frac{d-1}{2}) > 0$ 
for $d \geq 5$ and therefore $\Upsilon'(\frac{d-1}{2}) =
2\Upsilon(\frac{d-1}{2})\psi(\frac{d-1}{2}) > 0$ for $d \geq 5$. From this
we know that $\Upsilon(t)$ is increasing for $t$ sufficiently close to
$\frac{d-1}{2}$. By the Intermediate Value Theorem, there exists
$t^* \in (0,\frac{d-1}{2})$ such that $\Upsilon(t^*) = 1$. This must
be unique since $\Upsilon$ is convex on $(0,\frac{d-1}{2})$.
\end{proof}

\section{Proof of Theorem \ref{t:Main}}\label{section:TheoremMainProof}

First we will need to provide a brief discussion of spherical
harmonics. Essentially, the arguments in this section are already
present in \cite{BS2}. We include the details here for
self-containedness, and to clarify a small technical point in the
expression of the projection $H_k$ from $L^2(\mathbb{S}^{d-1})$ to
$\mathcal{H}_k$; here we use Legendre polynomials instead of
Gegenbauer polynomials to include the case $d = 2$ in a more
transparent way.

Let $\mathcal{A}_k$ be the space of solid spherical harmonics of
degree $k$ (these are harmonic polynomials on $\mathbb{R}^d$ which
are homogeneous of degree $k$), and let $\mathcal{H}_k$ be the space
of spherical harmonics of degree $k$ (these are restriction of
functions in $\mathcal{A}_k$ to the sphere $\mathbb{S}^{d-1}$). Then
the eigenvalues of the Laplace--Beltrami operator $-\Lambda$ on the
sphere $\mathbb{S}^{d-1}$ are
\begin{equation}\label{def:mu}
\mu_k=
k(k+d-2)   
\end{equation}
and the corresponding eigenspaces are $\mathcal{H}_k$. The
projection $H_k$ from $L^2(\mathbb{S}^{d-1})$ to $\mathcal{H}_k$ can
be written
\[
H_kf(\omega)=\frac{N_{k,d}}{|\mathbb{S}^{d-1}|}
\int_{\mathbb{S}^{d-1}}P_{k,d}(\omega\cdot\widetilde{\omega})f(\widetilde{\omega})
\,\mathrm{d}\widetilde{\omega},
\]
where
\[
N_{k,d}=\frac{(2k+d-2)(k+d-3)!}{k!(d-2)!},
\]
$|\mathbb{S}^{d-1}|$ is the surface area of the sphere and $P_{k,d}$
is the Legendre polynomial of degree $k$ (see \cite{AtkinsonHan}).

Recall that we use the notation $\mathfrak{H}_k$ for the space of
all linear combinations of functions
\begin{equation*}
\xi \mapsto P(\xi) f_0(|\xi|) |\xi|^{-d/2-k+1/2}
\end{equation*}
where $P \in \mathcal{A}_k$ and $f_0 \in L^2(0,\infty)$. These
spaces allow us to decompose $L^2(\mathbb{R}^d)$ as
\begin{equation*} \label{e:directsum}
L^2(\mathbb{R}^d) = \bigoplus_{k=0}^\infty \mathfrak{H}_k\,,
\end{equation*}
where this is a complete orthogonal direct sum decomposition in the
sense that the closed subspaces $\mathfrak{H}_k$ are mutually
orthogonal in $L^2(\mathbb{R}^d)$ for $k \in \mathbb{N}_0$, and
every $f \in L^2(\mathbb{R}^d)$ can be written $f =
\sum_{k=0}^\infty f_k$ for some $f_k \in \mathfrak{H}_k$. We refer
the reader to \cite{Shimakura} and \cite{SteinWeiss} for further
details.
\par
The operator $H_k$ on $L^2(\mathbb{S}^{d-1})$ can be homogeneously
extended to $L^2(\mathbb{R}^d)$ in a natural way by
\[
H_kf(x)=\frac{N_{k,d}}{|\mathbb{S}^{d-1}|}
\int_{\mathbb{S}^{d-1}}P_{k,d}(x'\cdot\widetilde{\omega})f(|x|\widetilde{\omega})
\,\mathrm{d}\widetilde{\omega},
\]
where $x' = |x|^{-1}x$, and we shall use the same notation $H_k$ as long as there is no
confusion. In this way, the Laplace--Beltrami operator $-\Lambda$
can be also regarded as an operator on $L^2(\mathbb{R}^d)$ by using
the spectral decomposition
\[
-\Lambda=\sum_{k=0}^\infty \mu_k H_k.
\]
It is easy to see that the eigenvalues of this operator are again
$\{\mu_k\}_{k=0}^\infty$, and $H_k$ is the projection to the
eigenspace $\mathfrak{H}_k$ of $\mu_k$ for each $k\in\N_0$. For
any functions $\theta(\rho)$ of $\rho\in[0,\infty)$, we can
also define $\theta(-\Lambda)$ as an operator on $L^2(\mathbb{R}^d)$
by
\[
\theta(-\Lambda)=\sum_{k=0}^\infty \theta(\mu_k) H_k.
\]
\begin{proposition} \label{p:commute}
For each $k \in \mathbb{N}_0$, the operator $H_k$ commutes with the Fourier transform and the inverse Fourier transform. In particular, each subspace $\mathfrak{H}_k$ is invariant under the action of these operators.
\end{proposition}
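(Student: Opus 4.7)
The plan is to reduce the commutation statement to showing that each subspace $\mathfrak{H}_k$ is invariant under $\mathcal{F}$; once this is in place, the commutation $H_k \mathcal{F} = \mathcal{F} H_k$ follows from the uniqueness of the direct sum decomposition together with the observation that $H_k$ is the orthogonal projection of $L^2(\mathbb{R}^d)$ onto $\mathfrak{H}_k$.

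First I would verify this projection identification. An element $f$ of $\mathfrak{H}_k$ takes, in polar coordinates $\xi = \rho \omega$, the form $\rho^{(1-d)/2} f_0(\rho) P(\omega)$, so at each fixed radius its angular profile lies in $\mathcal{H}_k$; hence $H_k f = f$. Conversely, an element of $\mathfrak{H}_\ell$ with $\ell \neq k$ has angular profile orthogonal to $\mathcal{H}_k$ in $L^2(\mathbb{S}^{d-1})$, so $H_k$ annihilates it. Combined with the orthogonal decomposition $L^2(\mathbb{R}^d) = \bigoplus_k \mathfrak{H}_k$, this identifies $H_k$ as the orthogonal projection onto $\mathfrak{H}_k$.

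The core step is the classical Bochner--Hecke identity: if $P \in \mathcal{A}_k$ and $h$ is a radial function for which $P(\cdot) h(|\cdot|) \in L^2(\mathbb{R}^d)$, then the Fourier transform of $\xi \mapsto P(\xi) h(|\xi|)$ is of the form $x \mapsto P(x) H(|x|)$ for some radial $H$ obtained by a Hankel transform of order $k + \tfrac{d}{2}-1$ (see Stein--Weiss). A function in $\mathfrak{H}_k$ has exactly this form with $h(\rho) = \rho^{-d/2-k+1/2} f_0(\rho)$, so Bochner--Hecke produces an image of the form $P(x) H(|x|)$. Since $\mathcal{F}$ is unitary on $L^2(\mathbb{R}^d)$, this image lies in $L^2(\mathbb{R}^d)$, which forces $H$ to admit the normalisation $H(\rho) = \rho^{-d/2-k+1/2} g_0(\rho)$ with $g_0 \in L^2(0,\infty)$. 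Hence $\mathcal{F}(\mathfrak{H}_k) \subset \mathfrak{H}_k$.

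To conclude, given $f \in L^2(\mathbb{R}^d)$, write $f = \sum_k f_k$ with $f_k \in \mathfrak{H}_k$. Then $\mathcal{F} f = \sum_k \mathcal{F} f_k$ with each summand in $\mathfrak{H}_k$ by the previous step, and uniqueness of the decomposition yields $H_k \mathcal{F} f = \mathcal{F} f_k = \mathcal{F} H_k f$. The identity $\mathcal{F}^{-1} g(\xi) = \mathcal{F} g(-\xi)$, together with the parity $P(-\xi) = (-1)^k P(\xi)$ of homogeneous harmonic polynomials (hence invariance of $\mathfrak{H}_k$ under $\xi \mapsto -\xi$), gives the same commutation for $\mathcal{F}^{-1}$. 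The only mildly delicate point is the bookkeeping of the power $|\xi|^{-d/2-k+1/2}$ in Bochner--Hecke, which is precisely the weight that makes the associated Hankel transform an isometry between two copies of $L^2(0,\infty)$; this is what ensures $\mathcal{F}$ maps $\mathfrak{H}_k$ onto, not merely into, itself.
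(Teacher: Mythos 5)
Your proof is correct, but it runs in the opposite logical direction from the paper's and rests on a different classical identity. The paper proves the commutation relation directly: it writes out $\widehat{H_kf}$ and $H_k\widehat{f}$ in polar coordinates, reduces matters to a single identity between two integrals over $\mathbb{S}^{d-1}$ involving the Legendre kernel $P_{k,d}$, and evaluates both sides by the Funk--Hecke theorem; the invariance of $\mathfrak{H}_k$ then falls out as the ``in particular'' consequence. You instead prove the invariance $\mathcal{F}(\mathfrak{H}_k)\subset\mathfrak{H}_k$ first, via the Bochner--Hecke theorem (the Fourier transform of $P(\xi)h(|\xi|)$ is $i^{-k}P(x)H(|x|)$ with $H$ a Hankel transform of order $k+\tfrac{d}{2}-1$), and then deduce the commutation from the identification of $H_k$ as the orthogonal projection onto $\mathfrak{H}_k$ together with uniqueness of the decomposition $L^2(\mathbb{R}^d)=\bigoplus_k\mathfrak{H}_k$. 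The two routes are cousins---Bochner--Hecke is typically derived from Funk--Hecke---but yours leans on more global structure (completeness of the direct sum, unitarity of the Hankel transform on the weighted radial space, and the projection property of $H_k$, all of which the paper records separately with references to Stein--Weiss and Shimakura), whereas the paper's kernel-level computation is self-contained modulo Funk--Hecke alone and needs no density or decomposition argument. Your observation that the weight $|\xi|^{-d/2-k+1/2}$ is exactly what makes the induced radial map an $L^2(0,\infty)$ isometry is the right justification for why the image lands in $\mathfrak{H}_k$ and not merely in some larger space; and your parity argument $P(-\xi)=(-1)^kP(\xi)$ handles $\mathcal{F}^{-1}$ correctly (up to the harmless normalising constant $(2\pi)^{-d}$ in $\mathcal{F}^{-1}g(\xi)=(2\pi)^{-d}\mathcal{F}g(-\xi)$, which does not affect subspace invariance). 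Both arguments are complete; the paper's is shorter and more elementary, yours makes the underlying Hankel-transform structure explicit.
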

Since we are handling explicit constants, we clarify that $\widehat{f}$ is the Fourier transform of $f$ given by
\begin{equation*}
\widehat{f}(\xi) = \int_{\mathbb{R}^d} f(x) \exp(-i x \cdot \xi) \,\mathrm{d}x.
\end{equation*}
\begin{proof}[Proof of Proposition \ref{p:commute}]
Using polar coordinates, we have
\[
\widehat{H_kf}(x) = \frac{N_{k,d}}{|\mathbb{S}^{d-1}|} \int_{0}^\infty \int_{\mathbb{S}^{d-1}} \int_{\mathbb{S}^{d-1}} \exp(-ir\omega \cdot x) P_{k,d}(\omega \cdot \widetilde{\omega}) f(r\widetilde{\omega}) r^{d-1} \, \mathrm{d}\omega \mathrm{d}\widetilde{\omega} \mathrm{d}r
\]
and
\[
H_k \widehat{f}(x) = \frac{N_{k,d}}{|\mathbb{S}^{d-1}|} \int_{0}^\infty \int_{\mathbb{S}^{d-1}} \int_{\mathbb{S}^{d-1}} \exp(-i |x| \widetilde{\omega} \cdot r \omega) P_{k,d}(x' \cdot \widetilde{\omega}) f(r\omega) r^{d-1} \, \mathrm{d}\omega \mathrm{d}\widetilde{\omega} \mathrm{d}r\,
\]
so it suffices to check that
\[
\int_{\mathbb{S}^{d-1}} \int_{\mathbb{S}^{d-1}} \exp(-ir\omega \cdot x) P_{k,d}(\omega \cdot \widetilde{\omega}) f(r\widetilde{\omega})\, \mathrm{d}\omega \mathrm{d}\widetilde{\omega} = \int_{\mathbb{S}^{d-1}} \int_{\mathbb{S}^{d-1}} \exp(-i |x| \widetilde{\omega} \cdot r \omega) P_{k,d}(x' \cdot \widetilde{\omega}) f(r\omega) \, \mathrm{d}\omega \mathrm{d}\widetilde{\omega} 
\]
for each $x \in \mathbb{R}^d$ and $r > 0$. By switching the $\omega$ and $\widetilde{\omega}$ variables on the left-hand side, it now suffices to show
\begin{equation} \label{e:commute}
\int_{\mathbb{S}^{d-1}} \exp(-ir|x| \widetilde{\omega} \cdot x') P_{k,d}(\widetilde{\omega} \cdot \omega)  \mathrm{d}\widetilde{\omega} = \int_{\mathbb{S}^{d-1}} \exp(-ir |x| \widetilde{\omega} \cdot \omega) P_{k,d}(x' \cdot \widetilde{\omega}) \, \mathrm{d}\widetilde{\omega}
\end{equation}
for each $x \in \mathbb{R}^d$, $r > 0$ and $\omega \in \mathbb{S}^{d-1}$. However, $\widetilde{\omega}
\mapsto P_{k,d}(\widetilde{\omega} \cdot \omega)$ and $\widetilde{\omega} \mapsto
P_{k,d}(x' \cdot \widetilde{\omega})$ are spherical
harmonics of degree $k$ and we may apply by the Funk--Hecke theorem
(see, for example, \cite{AtkinsonHan}) to see that both sides of
\eqref{e:commute} are equal to 
\begin{equation*}
|\mathbb{S}^{d-2}|\, P_{k,d}(x'\cdot\omega) \int^1_{-1}P_{k,d}(s)\exp(-irs|x|)(1-s^2)^{\frac{d-3}2}\,\mathrm{d}s \,,
\end{equation*}
which gives the desired claim. The proof for the inverse Fourier transform is almost identical and we omit the details.
\end{proof}
It follows from Proposition \ref{p:commute} that 
\[
(H_k\Psi(|\nabla|)f)\widehat{\,}\,(\xi) = H_k (\Psi(|\nabla|)f)\widehat{\,}\,(\xi) = \Psi(|\xi|) H_k\widehat{f}(\xi) = (\Psi(|\nabla| H_k f)\widehat{\,}\, (\xi)
\]
and therefore $H_k\Psi(|\nabla|) = \Psi(|\nabla|)H_k$. From this, we also know that $\theta(-\Lambda)$ also commutes with the Fourier transform, its inverse, and $\Psi(|\nabla|)$. We use this observation in order to prove Theorem \ref{t:Main}.

\begin{proof}[Proof of Theorem \ref{t:Main}]
Let $S_\theta : L^2(\mathbb{R}^d) \to L^2(\mathbb{R}^{d+1})$ be the
linear operator given by
\begin{equation} \label{e:Sdefn}
S_\theta f(x,t) = |x|^{-\frac{\tau}{2}} \theta(-\Lambda) \int_{\mathbb{R}^d} \exp(i(x \cdot \xi +
t\phi(|\xi|)) \psi(|\xi|) f(\xi) \, \mathrm{d}\xi
\end{equation}
for Schwartz functions $f : \mathbb{R}^d \to \mathbb{C}$, $(x,t) \in
\mathbb{R}^d \times \mathbb{R}$ and where $\theta(-\Lambda)$ is an
operation in the $x$-variable. The relevance of the operator
$S_\theta$ is seen through the expression
\begin{equation} \label{e:Slink}
|x|^{-\frac{\tau}{2}} \theta(-\Lambda) \psi(|\nabla|)\exp(it\phi(|\nabla|))f(x)
= (2\pi)^{-d} S_\theta\widehat{f}(x,t).
\end{equation}

\begin{proposition}\label{t:Sg}
Let $\tau \in (1,d)$. Then the operator $S_\theta^*S_\theta$ has the
spectral decomposition
\[
S_\theta^* S_\theta
=\sum_{k=0}^\infty \lambda_k\,|\theta(\mu_k)|^2\,H_k,
\]
where, for each $k \in \mathbb{N}_0$,
\begin{equation}\label{def:lambda}
\lambda_k=
(2\pi)^{d+1}2^{1-\tau}
\frac{\Gamma(\tau-1)\Gamma(k+\tfrac{d-\tau}{2})}
{\Gamma(\frac{\tau}{2})^2\Gamma(k+\tfrac{d+\tau}{2}-1)}
\end{equation}
and $\mu_k = k(k+d-2)$.
\end{proposition}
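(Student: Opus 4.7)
The plan is to diagonalize $S_\theta^* S_\theta$ along the orthogonal decomposition $L^2(\R^d) = \bigoplus_{k=0}^\infty \mathfrak{H}_k$ by computing $\|S_\theta f\|_{L^2(\R^{d+1})}^2$ when $f$ lies in a single $\mathfrak{H}_k$. Because $\theta(-\Lambda)$, $\psi(|\nabla|)$ and $\exp(it\phi(|\nabla|))$ all commute with every projection $H_k$ (by Proposition \ref{p:commute} and the discussion preceding the proof of Theorem \ref{t:Main}), and multiplication by the radial weight $|x|^{-\tau/2}$ preserves each $\mathfrak{H}_k$, the operator $S_\theta^* S_\theta$ leaves $\mathfrak{H}_k$ invariant; it then suffices to identify the scalar by which it acts on $\mathfrak{H}_k$.

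Fix $k\in\N_0$ and take $f(\xi) = f_0(|\xi|) |\xi|^{(1-d)/2} Y(\xi/|\xi|)$ with $Y\in\mathcal{H}_k$ and $f_0\in L^2(0,\infty)$, so that $\|f\|_{L^2}^2 = \|Y\|_{L^2(\mathbb{S}^{d-1})}^2 \|f_0\|_{L^2(0,\infty)}^2$. Passing to polar coordinates $\xi = \rho\omega$ in \eqref{e:Sdefn} and applying the Bochner identity (a standard consequence of the Funk--Hecke theorem)
\[
\int_{\mathbb{S}^{d-1}} e^{i\rho r\, x'\cdot\omega}\, Y(\omega)\,\mathrm{d}\omega = (2\pi)^{d/2}\, i^k\, (\rho r)^{-(d-2)/2} J_{k+(d-2)/2}(\rho r)\, Y(x'),
\]
with $r=|x|$, collapses the angular integral. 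Since $Y(x')$ is a $\mu_k$-eigenfunction of $-\Lambda$, the operator $\theta(-\Lambda)$ pulls out the factor $\theta(\mu_k)$, giving
\[
S_\theta f(x,t) = (2\pi)^{d/2}\, i^k\, \theta(\mu_k)\, Y(x')\, r^{-(d+\tau-2)/2}\int_0^\infty e^{it\phi(\rho)} \psi(\rho) f_0(\rho)\, \rho^{1/2} J_{k+(d-2)/2}(\rho r)\,\mathrm{d}\rho.
\]

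Next I would integrate $|S_\theta f(x,t)|^2$ first in $t$. Plancherel in $t$ combined with the substitution $\sigma = \phi(\rho)$ (legitimate since $\phi$ is injective and differentiable) converts the squared time integral into $2\pi \int_0^\infty \psi(\rho)^2 |\phi'(\rho)|^{-1} |f_0(\rho)|^2 \rho\, J_{k+(d-2)/2}(\rho r)^2\,\mathrm{d}\rho$, and the hypothesis $\psi(\rho)^2 = |\phi'(\rho)|\rho^{1-\tau}$ simplifies the $\rho$-weight to $\rho^{2-\tau}$. Integrating in $x$ in polar coordinates, the angular integration contributes $\|Y\|_{L^2(\mathbb{S}^{d-1})}^2$ and Fubini leaves
\[
\|S_\theta f\|_{L^2}^2 = (2\pi)^{d+1}\, |\theta(\mu_k)|^2\, \|Y\|_{L^2(\mathbb{S}^{d-1})}^2 \int_0^\infty |f_0(\rho)|^2 \rho^{2-\tau} \int_0^\infty r^{1-\tau} J_{k+(d-2)/2}(\rho r)^2\,\mathrm{d}r\,\mathrm{d}\rho.
\]
Substituting $u = \rho r$ in the inner integral reduces it to the classical Weber--Schafheitlin integral
\[
\int_0^\infty u^{1-\tau} J_\nu(u)^2\,\mathrm{d}u = \frac{\Gamma(\tau-1)\Gamma(\nu+1-\tau/2)}{2^{\tau-1}\Gamma(\tau/2)^2\Gamma(\nu+\tau/2)}
\]
with $\nu = k+(d-2)/2$. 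Collecting the resulting Gamma factors reproduces exactly $\lambda_k |\theta(\mu_k)|^2 \|f\|_{L^2}^2$ with $\lambda_k$ as in \eqref{def:lambda}.

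The main technical point requiring attention is that the Weber--Schafheitlin integral must converge throughout the admissible range of $\tau$ and for every $k\in\N_0$: the small-$u$ asymptotic $J_\nu(u) = O(u^\nu)$ forces $\tau < 2\nu+2 = 2k+d$, which is implied by $\tau < d$ when $k = 0$ and is automatic for $k\geq 1$, while the oscillatory large-$u$ behaviour requires $\tau > 1$. Thus $\tau\in(1,d)$ is precisely the correct hypothesis and the derivation holds uniformly in $k$, yielding the claimed spectral decomposition $S_\theta^* S_\theta = \sum_{k=0}^\infty \lambda_k\,|\theta(\mu_k)|^2\,H_k$.
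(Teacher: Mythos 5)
Your proof is correct, and it takes a genuinely different route from the paper's. The paper disposes of Proposition \ref{t:Sg} in two lines: it cites the case $\theta\equiv 1$ from Theorem 1.5 of \cite{BS} and then deduces the general case from the factorisation $S_\theta = S_1\circ\theta(-\Lambda)$ together with the commutation of $\theta(-\Lambda)$ with the inverse Fourier transform and with $\psi(|\nabla|)\exp(it\phi(|\nabla|))$. You instead carry out the full diagonalisation from scratch: the Bochner/Funk--Hecke collapse of the angular integral, Plancherel in $t$ combined with the substitution $\sigma=\phi(\rho)$, and the Weber--Schafheitlin evaluation of $\int_0^\infty u^{1-\tau}J_\nu(u)^2\,\mathrm{d}u$. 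I checked the bookkeeping: the exponent $r^{-(d+\tau-2)/2}$, the cancellation of $\rho^{2-\tau}$ after the substitution $u=\rho r$, the identification $\nu+1-\tfrac{\tau}{2}=k+\tfrac{d-\tau}{2}$ and $\nu+\tfrac{\tau}{2}=k+\tfrac{d+\tau}{2}-1$, and the convergence window $1<\tau<2\nu+2$ all come out right, and the passage from the quadratic form on each $\mathfrak{H}_k$ to the operator identity is legitimate since you first establish that $S_\theta^*S_\theta$ commutes with each $H_k$. Two remarks. First, your argument is essentially a self-contained reproof of the cited ingredient \cite[Theorem 1.5]{BS} with the harmless extra factor $\theta(\mu_k)$ woven in; this buys independence from the reference and an explicit verification of the constant $\lambda_k$, at the cost of length, whereas the paper's reduction isolates exactly where $\theta$ enters (as a scalar on each eigenspace). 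Second, note that you invoke \eqref{e:phipsi} to turn $\psi(\rho)^2/|\phi'(\rho)|$ into $\rho^{1-\tau}$; this hypothesis is indeed in force where the proposition sits (inside the proof of Theorem \ref{t:Main}) and is necessary for the stated $\phi$-independent value of $\lambda_k$, but it would be worth flagging explicitly since the proposition's statement does not repeat it.
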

\begin{proof}
When $\theta$ is identically equal to one, this follows from
\cite{BS} (see Theorem 1.5). The general case follows from 
\begin{equation} \label{e:Sg}
S_\theta=S_1 \circ \theta(-\Lambda).
\end{equation}
To see \eqref{e:Sg}, we simply use our observation that $\theta(-\Lambda)$ commutes with the inverse Fourier transform and, for each fixed $t$, commutes with the operator $\psi(|\nabla|) \exp(it\phi(|\nabla|))$. 
\end{proof}
Clearly, from Proposition \ref{t:Sg} we have
\begin{align*}
\|S_\theta f\|^2_{L^2(\mathbb{R}^{d+1})} = \sum_{k=0}^\infty\sum_{\ell=0}^\infty\lambda_k\,|\theta(\mu_k)|^2
(H_kf,H_\ell f)_{L^2(\mathbb{R}^d)}
=\sum_{k=0}^\infty\lambda_k\,|\theta(\mu_k)|^2\|H_kf\|^2_{L^2(\mathbb{R}^d)}
\end{align*}
and therefore
\begin{equation*}
\inf_{k \in \mathbb{N}_0} \lambda_k\,|\theta(\mu_k)|^2 \,\|f\|_{L^2(\mathbb{R}^d)}^2 \leq
\|S_\theta f\|^2_{L^2(\mathbb{R}^{d+1})} \leq \sup_{k \in \mathbb{N}_0} \lambda_k\,|\theta(\mu_k)|^2 \,\|f\|_{L^2(\mathbb{R}^d)}^2.
\end{equation*}
Using \eqref{e:Slink} and Plancherel's theorem
$\|\widehat{f}\|_{L^2(\R^d)}^2 = (2\pi)^{d}\|f\|_{L^2(\R^d)}^2$ we
obtain
\begin{equation*}
\inf_{k \in \mathbb{N}_0} \beta_k \, \|f\|_2^2 \leq \int_{\mathbb{R}} \int_{\mathbb{R}^d}
|\psi(|\nabla|) \, \theta(-\Lambda) \, \exp(it\phi(|\nabla|))f(x)|^2 \, \frac{\mathrm{d}x\mathrm{d}t}{|x|^\tau} \leq \sup_{k \in \mathbb{N}_0} \beta_k \, \|f\|_2^2,
\end{equation*}
where the $\beta_k$ are as given in the statement of Theorem
\ref{t:Main}. The optimality of the constants and the remaining
claims concerning extremisers follow in a straightforward way using
the fact that
\[\|S_\theta f\|^2_{L^2(\mathbb{R}^{d+1})} = \lambda_k\,|\theta(\mu_k)|^2\|f\|^2_{L^2(\mathbb{R}^d)}
\]
for any $f\in \mathfrak{H}_{k}\setminus\{0\}$ and any $k\in\N_0$, orthogonality arguments and Proposition \ref{p:commute}.
\end{proof}

\section{Proofs of Theorem \ref{t:betak} and Proposition \ref{p:growth}}\label{section:computation}
Recall that $\mathbf{b}(d,\tau;\theta) = \inf_{k \in \mathbb{N}_0}
\beta_k$ and $\mathbf{B}(d,\tau;\theta) = \sup_{k \in \mathbb{N}_0}
\beta_k$ where
\[
\beta_k = \beta_k(d,\tau;\theta) = \pi 2^{2-\tau} \frac{\Gamma(\tau - 1)\Gamma(k+\tfrac{d-\tau}{2})}
{\Gamma(\tfrac{\tau}{2})^2\Gamma(k+\tfrac{d+\tau}{2}-1)} \,(1+k(k+d-2))^{\frac{\tau-1}{2}}
\]
for $\theta(\rho) = (1+\rho)^{\frac{\tau - 1}{4}}$.

\begin{proof}[Proof of Theorem \ref{t:betak}]
We set
\[
h(k,\tau) := \frac{\beta_{k+1}}{\beta_k}
=
\frac{2k+d-\tau}{2k+d+\tau-2}\,
\left(\frac{1+(k+1)(k+d-1)}{1+k(k+d-2)}\right)^{\frac{\tau-1}{2}}.
\]
Then we have that $h(k,\tau)\to1$ as $k\to\infty$ and will often use this fact
without notification.
Also we have
\[
\frac{\partial h}{\partial k}(k,\tau)=-A(d,k,\tau)\b{B_0(d,\tau)+B_1(d,\tau)k+B_2(d,\tau)k^2},
\]
where
\begin{align*}
A(d,k,\tau) & =
\frac{\tau - 1}
{2(2k+d+\tau-2)^2 \{1+k(k+d-2)\}^{2}}
\p{\frac{1+(k+1)(k+d-1)}{1+k(k+d-2)}}^{\frac{\tau-3}{2}} \\
B_0(d,\tau) & = d\big( (d-3)\tau(2-\tau) + (d-4)(d^2-d+2) \big), \\
B_1(d,\tau) & = 2(d-1)\big(\tau(2-\tau) + 3d(d-4)\big)= (d-1)B_2(d,\tau), \\
B_2(d,\tau) & = 2\big(\tau(2-\tau) + 3d(d-4)\big).
\end{align*}
Regarding the function $A$, the only property that we use in the
rest of the proof is that $A > 0$ since $\tau > 1$.

The cases where $d=2$, $d=3$ and $d=4$ with $\tau \in (2,4)$ are
straightforward to handle because, for such $(d,\tau)$, we have
$B_j(d,\tau) < 0$ for each $j=1,2,3$. This is clear from the
expressions:
\begin{table}[ht]
\begin{tabular}{ | c | c | c | c | }
\hline
$(d,\tau)$ & $B_0(d,\tau)$ & $B_1(d,\tau)$ & $B_2(d,\tau)$ \\ \hline \hline
$d=2$   & $-2\tau(2-\tau)-16$ & $2\tau(2-\tau)-24$ & $2\tau(2-\tau)-24$ \\ \hline
$d=3$  & $-24$ & $4\tau(2-\tau)-36$ & $2\tau(2-\tau) -18$ \\ \hline
$d=4$  & $4\tau(2-\tau)$ & $6\tau(2-\tau)$ & $2\tau(2-\tau)$ \\ \hline
\end{tabular}
\end{table}

This means $h(\cdot,\tau)$ is strictly increasing (to 1) so that
$(\beta_k)_{k \in \mathbb{N}_0}$ is a strictly decreasing sequence.
This implies
$$
\inf_{k \in \mathbb{N}_0} \beta_k = \lim_{k \to \infty} \beta_k \qquad \text{and} \qquad \sup_{k \in \mathbb{N}_0} \beta_k = \beta_0.
$$
In the special case $(d,\tau)=(4,2)$, we clearly have $B_j(4,2) = 0$
for each $j=1,2,3$. This means $h(\cdot,\tau) = 1$ and the sequence
$(\beta_k)_{k \in \mathbb{N}_0}$ is constant. Using, for example,
\eqref{e:alpha0}, we have that this constant value is equal to
$\pi$. Also, when $d=4$ and $\tau \in (1,2)$ it is clear that
$B_j(4,\tau) > 0$ for each $j=1,2,3$. In such a case,
$h(\cdot,\tau)$ is strictly decreasing, $(\beta_k)_{k \in
\mathbb{N}_0}$ is a strictly increasing sequence, and therefore
$$
\inf_{k \in \mathbb{N}_0} \beta_k = \beta_0 \qquad \text{and} \qquad \sup_{k \in \mathbb{N}_0} \beta_k = \lim_{k \to \infty} \beta_k.
$$

We next consider the case $d \geq 5$ and begin with some preliminary
observations. Recall that $\tau_*$ is uniquely defined by
$h(0,\tau_*) = 1$. It is also true that, for $\tau \in (1,d)$, we
have $h(0,\tau) < 1$ if and only if $\tau \in (\tau_*,d)$.

Since
\begin{equation*}
  \beta_0 = \pi 2^{2-\tau} \frac{\Gamma(\tau - 1)}
{\Gamma(\tfrac{\tau}{2})^2} \frac{\Gamma(\tfrac{d-\tau}{2})}{\Gamma(\tfrac{d+\tau}{2}-1)} =
\frac{\Gamma(\tfrac{d-\tau}{2})}{\Gamma(\tfrac{d+\tau}{2}-1)} \lim_{k \to \infty} \beta_k
\end{equation*}
we see that $\beta_0 = \lim_{k \to \infty} \beta_k$ when $\tau =
\tau^*$, and it is also true that, for $\tau \in (1,d)$, we have
\begin{equation} \label{e:asubstar}
\beta_0 > \lim_{k\to\infty} \beta_k \qquad \Longleftrightarrow \qquad \tau \in (\tau^*,d).
\end{equation}
We also record the following lemma, which is completely elementary.
\begin{lemma} \label{l:h}
Let $d \geq 5$ and $\tau \in (1,d)$. Then
 $h(k,\tau)$ is strictly decreasing to $1$ for sufficiently
large $k$. Furthermore, $h(\cdot,\tau)$ has at
most one stationary point on $[0,\infty)$, and when it exists it is
a global maximum on this domain.
\end{lemma}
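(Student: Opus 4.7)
The plan is to read off everything from the factorisation $\frac{\partial h}{\partial k}(k,\tau) = -A(d,k,\tau)\, Q(k,\tau)$ already displayed in the proof of Theorem \ref{t:betak}, where $Q(k,\tau) := B_0(d,\tau) + B_1(d,\tau) k + B_2(d,\tau) k^2$. Since $A > 0$ for $\tau > 1$, the sign of $\partial h/\partial k$ is opposite to the sign of $Q(\cdot,\tau)$, and the whole lemma reduces to a sign analysis of a quadratic in $k$.

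First I would show that the leading coefficient $B_2(d,\tau) = 2(\tau(2-\tau) + 3d(d-4))$ is strictly positive on $d \geq 5$ and $\tau \in (1,d)$. The map $\tau \mapsto \tau(2-\tau)$ is a downward parabola with values $1$ at $\tau = 1$ and $-d(d-2)$ at $\tau = d$, so for $\tau \in (1,d)$ one has $\tau(2-\tau) > -d(d-2)$, and it therefore suffices to check that $-d(d-2) + 3d(d-4) = 2d(d-5) \geq 0$, which holds precisely when $d \geq 5$. When $d=5$ this last inequality is not strict, but the strict inequality $\tau < d$ still yields $B_2 > 0$. The identity $B_1 = (d-1)B_2$ then immediately gives $B_1 > 0$ as well.

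With $B_1, B_2 > 0$, the quadratic $Q(\cdot,\tau)$ opens upward and, by Vieta's formulas, has sum of roots $-B_1/B_2 < 0$; hence at most one root lies in $[0,\infty)$, and so $h(\cdot,\tau)$ has at most one stationary point on $[0,\infty)$. I would then split on the sign of $B_0(d,\tau) = Q(0,\tau)$. If $B_0 \geq 0$, positivity of $B_1$ and $B_2$ forces $Q(k,\tau) > 0$ for all $k > 0$, so $\partial h/\partial k < 0$ on $(0,\infty)$ and $h(\cdot,\tau)$ is strictly decreasing with no interior stationary point. If $B_0 < 0$, the product of roots $B_0/B_2$ is negative, so there is exactly one positive root $k_0 = k_0(d,\tau)$; at this root $Q$ changes sign from negative to positive, so $\partial h/\partial k$ changes from positive to negative, making $k_0$ the unique stationary point of $h(\cdot,\tau)$ on $[0,\infty)$ and necessarily a global maximum. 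In either case $Q(k,\tau) > 0$ for all sufficiently large $k$, so $\partial h/\partial k < 0$ there, and together with the already-noted limit $h(k,\tau) \to 1$ as $k \to \infty$ this gives the strict decrease to $1$ for large $k$.

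The main obstacle is the uniform positivity of $B_2$ across $\tau \in (1,d)$: the underlying estimate $2d(d-5) \geq 0$ degenerates to equality at $d = 5$, so strictness must be imported from the open condition $\tau < d$ in that borderline dimension. Once this is handled the rest is routine bookkeeping about the single quadratic $Q(\cdot,\tau)$.
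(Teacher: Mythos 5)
Your proposal is correct and follows essentially the same route as the paper: both arguments hinge on the displayed factorisation $\partial h/\partial k = -A\,(B_0+B_1k+B_2k^2)$ with $A>0$, deduce $B_1,B_2>0$ on $(1,d)$ from the monotonicity of $\tau\mapsto\tau(2-\tau)$ (equivalently, from the values $B_2(d,d)=4d(d-5)$, $B_1=(d-1)B_2$), and conclude that the upward-opening quadratic with negative root-sum has at most one root in $[0,\infty)$. Your sign-change analysis at the positive root is just a slightly more explicit version of the paper's observation that eventual decrease plus uniqueness of the stationary point forces it to be a global maximum.
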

\begin{proof}
Since $B_1(d,d) = 4d(d-1)(d-5)$ and $B_2(d,d) = 4d(d-5)$, it is
clear that $B_1(d,\tau), B_2(d,\tau) > 0$ for all $\tau \in (1,d)$.
This, of course, means that the quadratic function
$$
k \mapsto B_0(d,\tau) + B_1(d,\tau)k + B_2(d,\tau)k^2
$$
has at most one root on $[0,\infty)$. Since $A > 0$, it follows that
$h(\cdot,\tau)$ has at most one stationary point on $[0,\infty)$. It
is clear that $h(k,\tau)$ is strictly decreasing for sufficiently
large $k$ and therefore this stationary point must be a global
maximum when it exists.
\end{proof}
We can use Lemma \ref{l:h} to argue that if $\tau \in (1,\tau_*)$
then $h(0,\tau) > 1$ and therefore $h(k,\tau) > 1$ for all $k \geq
0$. Consequently, $(\beta_k)_{k \in \mathbb{N}_0}$ is strictly
increasing so that
\begin{equation} \label{e:abigastar}
\inf_{k \in \mathbb{N}_0} \beta_k = \beta_0 \qquad \text{and} \qquad \sup_{k \in \mathbb{N}_0} \beta_k = \lim_{k \to \infty} \beta_k.
\end{equation}
\begin{remark}
We can now deduce that $\tau_*$ cannot exceed $\tau^*$. If it were
true that $\tau^* < \tau_*$ then for $\tau \in (\tau^*,\tau_*)$ we
know from \eqref{e:abigastar} that $\lim_{k \to \infty} \beta_k >
\beta_0$. However, this contradicts \eqref{e:asubstar}.
\end{remark}

If $\tau \in [\tau_*,d)$, so that $h(0,\tau) \leq 1$, then from
Lemma \ref{l:h} it must be true that $h(\cdot,\tau)$ has a unique
global maximum which is strictly bigger than 1. By the Intermediate
Value Theorem there exists $k(\tau) \in [0,\infty)$ such that
$h(k(\tau),\tau) = 1$, and this justifies the existence of $k(\tau)$
satisfying \eqref{e:k(a)defn}. Since
there is only one stationary point of $h(\cdot,\tau)$ on
$[0,\infty)$ it follows that $k(\tau)$ is unique.

Suppose $k(\tau) \notin \mathbb{N}_0$. In this case, if $k \geq
k^*(\tau)$ then $h(k,\tau) > 1$ and $\beta_{k+1} > \beta_k$; that
is,
\begin{equation*}
\beta_{k^*(\tau)} < \beta_{k^*(\tau)+1} < \beta_{k^*(\tau)+2} < \cdots
\end{equation*}
Similarly, if $k \leq k^*(\tau)-1$ then $h(k,\tau) < 1$ and $\beta_{k+1}
< \beta_k$; that is,
\[
\beta_{k^*(\tau)} < \beta_{k^*(\tau)-1} < \cdots < \beta_0.
\]
This means, of course,
\begin{equation*}
\inf_{k \in \mathbb{N}_0} \beta_k = \beta_{k^*(\tau)}
\end{equation*}
and, using \eqref{e:asubstar},
\begin{equation*}
 \sup_{k \in \mathbb{N}_0} \beta_k = \max\{\beta_0,\lim_{k \to \infty} \beta_k\} = \left\{\begin{array}{llllll}
\lim_{k \to \infty} \beta_k & \text{if $\tau \in [\tau_*,\tau^*)$}  \\
\beta_0 & \text{if $\tau \in [\tau^*,d)$}
\end{array} \right..
\end{equation*}
By a similar argument, the same conclusion is true in the case
$k(\tau) \in \mathbb{N}_0$, except the infimum is not uniquely
attained because $h(k^*(\tau),\tau) = 1$ means $\beta_{k^*(\tau)+1}
= \beta_{k^*(\tau)}$.

The proof of Theorem \ref{t:betak} will be complete once we
verify that $k(\tau) \in (0,1)$ and $k^*(\tau) = 1$ whenever $d \geq 6$ and $\tau \in
(\tau_*,d)$. Since $h(0,\tau) < 1$ for such $\tau$, it suffices
to show that $h(1,\tau) > 1$. For this, we define
$$
\Theta(\tau) := h(1,\tau) = \frac{d+2-\tau}{d+\tau} \bigg(\frac{2d+1}{d}\bigg)^{\frac{\tau-1}{2}}.
$$
Necessarily $B_0(d,\tau) < 0$; otherwise each $B_j(d,\tau) > 0$ (see the proof of Lemma \ref{l:h})
which implies $h(\cdot,\tau)$ is decreasing on $(0,\infty)$. Since $h(0,\tau) < 1$ this means $\lim_{k \to \infty} h(k,\tau) < 1$, which is false.

Now $B_0(d,\tau) < 0$ if and only if $\tau \in
(\tau(d),d)$, where $\tau(d)$ is the largest root of
$$
\tau(2-\tau) = - \frac{(d-4)(d^2-d+2)}{d-3};
$$
that is,
$$
\tau(d) = 1 + \sqrt{1 + \frac{d-4}{d-3}(d^2-d+2)}\,.
$$
Define $\eta(d) = d-\tau(d)$ to be the length of the
interval for which $B_0(d,\tau) < 0$. Then it is straightforward to
check that
\begin{align*}
\eta(d)  = d-1-\sqrt{1 + \frac{d-4}{d-3}(d^2-d+2)} = \frac{8}{(d-3)\bigg(d-1+\sqrt{1 + \frac{d-4}{d-3}(d^2-d+2)}\bigg)} \leq \eta(6),
\end{align*}
where
$$
\eta(6) = \frac{8}{3(5+\sqrt{67/3})} = 0.274...
$$
We shall prove that
\begin{equation} \label{e:d=6main}
0 \leq \eta \leq \eta(d) \qquad \Longrightarrow \qquad \Theta(d-\eta) > 1.
\end{equation}
To establish \eqref{e:d=6main} first notice that
\begin{align*}
\Theta(d-\eta) = \frac{2+\eta}{2d-\eta} \bigg(\frac{2d+1}{d}\bigg)^{\frac{d-\eta-1}{2}}
\geq \frac{1}{d} \,2^{\frac{d-\eta-1}{2}} \bigg(1+\frac{1}{2d}\bigg)^{\frac{d-\eta-1}{2}}
\end{align*}
and therefore
\begin{equation*}
\Theta(d-\eta) \geq 2^{\frac{d-\eta-1}{2}} \bigg( \frac{5d-\eta-1}{4d^2} \bigg) \geq 2^{\frac{d-\eta(6)-1}{2}} \bigg( \frac{5d-\eta(6)-1}{4d^2} \bigg)
\end{equation*}
since $0 \leq \eta \leq \eta(d) \leq \eta(6)$. This means we will
have shown \eqref{e:d=6main} once we show that
\begin{equation} \label{e:d=6do}
2^{\frac{d-\eta(6)-1}{2}} > \frac{4d^2}{5d-\eta(6)-1}.
\end{equation}
When $d=6$, we have $2^{\frac{5-\eta(6)}{2}} = 5.144...$ and
$\frac{144}{29-\eta(6)} = 5.012...$ and hence \eqref{e:d=6do} is
true. We will show that \eqref{e:d=6do} is true for larger
dimensions using a straightforward induction argument, and so we
assume \eqref{e:d=6do} is true for some fixed $d \geq 6$. Using this
assumption,
\begin{equation*}
2^{\frac{d-\eta(6)}{2}} > \frac{4 \sqrt{2}d^2}{5d-\eta(6)-1}
\end{equation*}
and so it suffices to check that
\begin{equation} \label{e:d=6poly}
\frac{4 \sqrt{2}d^2}{5d-\eta(6)-1} \geq \frac{4(d+1)^2}{5(d+1)-\eta(6)-1}.
\end{equation}
It is clear that if $d \geq (2^{1/4} - 1)^{-1}$ then $\sqrt{2}d^2
\geq (d+1)^2$, and hence \eqref{e:d=6poly} holds for such $d$. But
$(2^{1/4} - 1)^{-1} = 5.285...$, therefore \eqref{e:d=6poly}, and
hence \eqref{e:d=6main}, is true for all $d \geq 6$.

Bringing everything together, whenever $d \geq 6$ and $\tau \in
(\tau_*,d)$, with $\eta = d-\tau$ then we have $0 < \eta < \eta(d)$
and consequently \eqref{e:d=6main} implies $h(1,\tau) = \Theta(\tau)
> 1$.

From the above proof, one can easily extract the claimed expressions
for the index sets $\mathbf{k}(d,\tau;\theta)$ and
$\mathbf{K}(d,\tau;\theta)$ on account of Theorem \ref{t:Main}; we omit the details.
\end{proof}

\begin{proof}[Proof of Proposition \ref{p:growth}]
To obtain the claimed lower bound, define
$$
\Delta_k(\varepsilon) = \frac{\partial}{\partial \varepsilon}(h(k,5- 2\varepsilon))
$$
for $\varepsilon \geq 0$. Then
\begin{align*}
\Delta_k(\varepsilon) & = \bigg[ \frac{2(k+2)}{(k+4-\varepsilon)^2} - \frac{k+\varepsilon}{k+4-\varepsilon} \log\bigg(\frac{k^2 + 5k + 5}{k^2+3k+1} \bigg)\bigg] \bigg(\frac{k^2 + 5k + 5}{k^2+3k+1} \bigg)^{2-\varepsilon} \\
& \leq \frac{2(k+2)}{(k+3)^2} \bigg(\frac{k^2 + 5k + 5}{k^2+3k+1} \bigg)^2
\end{align*}
for $0 < \varepsilon < 1$, and it follows from the Mean Value
Theorem that
\begin{equation*}
h(k,5-2\varepsilon) \leq h(k,5) + \varepsilon \frac{2(k+2)}{(k+3)^2} \bigg(\frac{k^2 + 5k + 5}{k^2+3k+1} \bigg)^2.
\end{equation*}
Hence
\begin{equation} \label{e:lowerimp}
0 \leq \varepsilon < \varepsilon(k) \qquad \Longrightarrow \qquad  h(k,5-2 \varepsilon) < 1,
\end{equation}
where
$$
\varepsilon(k) := (1-h(k,5)) \,\frac{(k+3)^2(k^2+3k+1)^2}{2(k+2)(k^2+5k+5)^2}\,.
$$
A straightforward calculation shows that
$$
1-h(k,5) = 1 - \frac{k(k^2+5k+5)^2}{(k+4)(k^2+3k+1)^2} = \frac{4}{(k+4)(k^2+3k+1)^2}
$$
and therefore
$$
\varepsilon(k) = \frac{2(k+3)^2}{(k+2)(k+4)(k^2+5k+5)^2} \geq \frac{C}{(k+1)^4}
$$
for some absolute constant $C$ and all $k \in \mathbb{N}_0$.

For fixed $\tau \in (\tau_*,5)$, if we take $k \in
\mathbb{N}_0$ such that
$$
k < \bigg(\frac{2C}{5-\tau}\bigg)^{1/4} - 1
$$
then $\frac{1}{2}(5-\tau) < \varepsilon(k)$ and, by \eqref{e:lowerimp},
we get $h(k,\tau) < 1$. This means that $k(\tau) \in \mathbb{N}_0$
satisfying \eqref{e:k(a)defn} satisfies the lower bound
\begin{equation*}
k(\tau) \geq \bigg(\frac{2C}{5-\tau}\bigg)^{1/4} - 1.
\end{equation*}

For the upper bound, by Lemma \ref{l:h} we make the observation that $k(\tau)$ cannot
exceed the positive value of $k$ at which $\frac{\partial
h}{\partial k}(k,\tau)$ is equal to zero, given by
\begin{equation*}
k = -2 + \sqrt{\frac{B_1(5,\tau) - B_0(5,\tau)}{B_2(5,\tau)}}\,;
\end{equation*}
that is
\begin{equation*}
k(\tau) \leq -2 + \sqrt{\frac{5-\tau(2-\tau)}{(5-\tau)(3+\tau)}}\,.
\end{equation*}
Hence, there exists positive constants $C_1$ and $C_2$ such that,
for all $\tau \in (\tau_*,5)$,
\begin{equation*}
\frac{C_1}{(5-\tau)^{1/4}} \leq k(\tau) \leq \frac{C_2}{(5-\tau)^{1/2}},
\end{equation*}
as claimed.
\end{proof}

\section{Further results}
\label{section:homogeneousg}

We begin by considering the case $\theta(\rho) = \rho^{\frac{\tau-1}{4}}$, and $\phi$ and $\psi$ satisfying \eqref{e:phipsi}, with $d \geq 2$ and $\tau \in (1,d)$. Then we have
\[
\beta_k = \pi 2^{2-\tau} \frac{\Gamma(\tau - 1)\Gamma(k+\tfrac{d-\tau}{2})}
{\Gamma(\tfrac{\tau}{2})^2\Gamma(k+\tfrac{d+\tau}{2}-1)} \,(k(k+d-2))^{\frac{\tau-1}{2}}
\]
and it is clear that $\mathbf{b}(d,\tau;\theta) = 0$. Also, if
\[
h(k,\tau) = \frac{\beta_{k+1}}{\beta_k} = \frac{2k+d-\tau}{2k+d+\tau-2} \,
\p{\frac{(k+1)(k+d-1)}{k(k+d-2)}}^{\frac{\tau-1}{2}}
\]
then
\[
\frac{\partial h}{\partial k}(k,\tau)=-A(d,k,\tau)\b{B_0(d,\tau)+B_1(d,\tau)k+B_2(d,\tau)k^2},
\]
where
\begin{align*}
A(d,k,\tau)&=
\frac{\tau - 1} {2(2k+d+\tau-2)^2 \{k(k+d-2)\}^{2}}
\p{\frac{(k+1)(k+d-1)}{k(k+d-2)}}^{\frac{\tau-3}{2}},
\\
B_0(d,\tau)&=(d-1)(d-2)(d-2+\tau)(d-\tau),
\\
B_1(d,\tau)&=2(d-1)(3(d-2)^2+(2-\tau)\tau),
\\
B_2(d,\tau)&=2(3 (d-2)^2+(2-\tau)\tau).
\end{align*}
For $d \geq 2$ and $\tau \in (1,d)$, obviously we have
$A(k,d,\tau)>0$ and $B_0(d,\tau) \geq 0$ .

Since $(2-\tau)\tau$ is strictly decreasing for $\tau \in (1,d)$, we
have $3(d-2)^2+(2-\tau)\tau
> 2(d-2)(d-3)$ and therefore $B_1(d,\tau)
> 0$ and $B_2(d,\tau)>0$. This means $h(\cdot,\tau)$ is
strictly decreasing and tends to $1$ from above. It follows that
$(\beta_k)_{k \in \mathbb{N}_0}$ is strictly increasing and
\[
\mathbf{B}(d,\tau;\theta)=\lim_{k\to\infty} \beta_k = \pi 2^{2-\tau} \frac{\Gamma(\tau - 1)}{\Gamma(\frac{\tau}{2})^2}.
\]

Using Theorem \ref{t:Main} we may use the above analysis to obtain
the following.
\begin{corollary}
Let $d \geq 2$, $s \in (-\frac{1}{2},\frac{d}{2}-1)$ and suppose
that $i\partial_t u + \frac{1}{2}\Delta u = 0$ on
$\mathbb{R}^{d+1}$. Then
\[
\int_{\mathbb{R}} \int_{\mathbb{R}^d}
|(-\Lambda)^{\frac{1+2s}{4}}u(x,t)|^2 \, \frac{\mathrm{d}x\mathrm{d}t}{|x|^{2(1+s)}}  \leq
\pi 2^{-2s} \frac{\Gamma(2s+1)}{\Gamma(s+1)^2}\, \|u(0)\|_{\dot{H}^s(\mathbb{R}^d)}^2,
\]
the constant is optimal and there are no extremisers.
\end{corollary}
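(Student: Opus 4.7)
The plan is to recognise the corollary as a direct specialisation of Theorem~\ref{t:Main} to the Schr\"odinger propagator with $\theta(\rho) = \rho^{(\tau-1)/4}$. Concretely, I would take $\phi(\rho) = \tfrac{1}{2}\rho^2$ (so $\phi'(\rho) = \rho$), $\tau = 2(1+s)$, and $\theta(\rho) = \rho^{(1+2s)/4}$; relation \eqref{e:phipsi} then forces $\psi(\rho) = \rho^{-s}$. To convert the $L^2$ norm of initial data appearing in Theorem~\ref{t:Main} into the $\dot H^s$ norm appearing in the corollary, I would apply the theorem to $\tilde u := |\nabla|^s u$, which is again a solution of $i\partial_t \tilde u + \tfrac{1}{2}\Delta \tilde u = 0$ and satisfies $\|\tilde u(0)\|_{L^2(\mathbb{R}^d)} = \|u(0)\|_{\dot H^s(\mathbb{R}^d)}$. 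Since $\psi(|\nabla|)\tilde u = |\nabla|^{-s}|\nabla|^s u = u$, the left-hand side of the conclusion of Theorem~\ref{t:Main} becomes precisely
\[
\int_{\mathbb{R}}\int_{\mathbb{R}^d} |(-\Lambda)^{\frac{1+2s}{4}} u(x,t)|^2 \, \frac{\mathrm{d}x\,\mathrm{d}t}{|x|^{2(1+s)}},
\]
as required.

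Next I would identify the constant. The explicit description of $(\beta_k)_{k\in\mathbb{N}_0}$ obtained in the analysis just before the corollary shows that $\beta_{k+1}/\beta_k = h(k,\tau)$ is strictly decreasing to $1$ from above, because $A > 0$ and the coefficients $B_0(d,\tau), B_1(d,\tau), B_2(d,\tau)$ are all nonnegative on the admissible range. Consequently $(\beta_k)_{k \in \mathbb{N}_0}$ is strictly increasing, and
\[
\sup_{k \in \mathbb{N}_0} \beta_k = \lim_{k \to \infty} \beta_k = \pi 2^{2-\tau}\frac{\Gamma(\tau-1)}{\Gamma(\tau/2)^2} = \pi 2^{-2s}\frac{\Gamma(2s+1)}{\Gamma(s+1)^2},
\]
by Stirling's formula (already invoked in \eqref{e:alphaklimit}); substituting $\tau = 2(1+s)$ yields exactly the claimed constant.

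Finally, optimality of the constant and the absence of extremisers follow from Theorem~\ref{t:Main} combined with the strict monotonicity observed above. Since the supremum of $(\beta_k)$ is a genuine limit and is not attained at any finite $k$, the index set $\mathbf{K}$ is empty, and the extremiser characterisation in Theorem~\ref{t:Main} gives no nontrivial extremising initial data. Sharpness of the constant itself is obtained by testing on sequences $f_n \in \mathfrak{H}_{k_n}$ with $k_n \to \infty$, for which the ratio $\|\psi(|\nabla|)\theta(-\Lambda) e^{it\phi(|\nabla|)}f_n\|_{L^2(|x|^{-\tau}\mathrm{d}x\mathrm{d}t)}^2 / \|f_n\|_{L^2}^2$ equals $\beta_{k_n}$, which tends to the stated supremum. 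I do not foresee a substantive obstacle beyond the bookkeeping required to verify that the $|\nabla|^s$ substitution is legitimate on the dense class of Schwartz data used in the proof of Theorem~\ref{t:Main}.
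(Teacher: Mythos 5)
Your proposal is correct and follows essentially the same route as the paper: specialise Theorem \ref{t:Main} with $\phi(\rho)=\tfrac12\rho^2$, $\psi(\rho)=\rho^{-s}$, $\tau=2(1+s)$, $\theta(\rho)=\rho^{(\tau-1)/4}$, and use the monotonicity of $\beta_{k+1}/\beta_k$ (via the signs of $A$ and $B_0,B_1,B_2$) to identify $\sup_k\beta_k=\lim_{k\to\infty}\beta_k=\pi 2^{-2s}\Gamma(2s+1)/\Gamma(s+1)^2$, with emptiness of $\mathbf{K}$ giving the absence of extremisers. The only cosmetic caveat is that $\beta_0=0$ here, so the ratio $h(k,\tau)$ should be interpreted for $k\geq 1$; this does not affect the conclusion.
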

\begin{corollary}
Let $d \geq 2$, $s \in (0,\frac{d-1}{2})$ and suppose that
$\partial_{tt} u - \Delta u = 0$ on $\mathbb{R}^{d+1}$. Then
\begin{equation*}
  2\int_\mathbb{R} \int_{\mathbb{R}^d} |(-\Lambda)^{\frac{s}{2}}u(x,t)|^2 \, \frac{\mathrm{d}x\mathrm{d}t}{|x|^{1+2s}} \leq
  \pi 2^{1-2s} \frac{\Gamma(2s)}{\Gamma(s+\frac{1}{2})^2} \left(\|u(0)\|_{\dot{H}^s(\mathbb{R}^d)}^2 + \|\partial_tu(0)\|_{\dot{H}^{s-1}(\mathbb{R}^d)}^2\right),
\end{equation*}
the constant is optimal and there are no extremisers.
\end{corollary}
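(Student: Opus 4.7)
The proof proposal is to mimic the derivation of Corollary \ref{c:W} from Theorem \ref{t:Main}, but now combining it with the analysis performed at the beginning of Section \ref{section:homogeneousg} for $\theta(\rho)=\rho^{\frac{\tau-1}{4}}$ rather than the analysis of Section \ref{section:explicit}.

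First I would decompose any solution of the wave equation as $u=u_++u_-$, where $u_\pm(t)=\exp(\pm it|\nabla|)f_\pm$ with $u(0)=f_++f_-$ and $\partial_tu(0)=i|\nabla|(f_+-f_-)$. Since the time Fourier supports of $u_+$ and $u_-$ are disjoint, Plancherel in $t$ gives
\[
\int_\mathbb{R}\int_{\mathbb{R}^d} |(-\Lambda)^{\frac{s}{2}}u(x,t)|^2\,\frac{\mathrm{d}x\mathrm{d}t}{|x|^{1+2s}}=\sum_{\pm}\int_\mathbb{R}\int_{\mathbb{R}^d} |(-\Lambda)^{\frac{s}{2}}u_\pm(x,t)|^2\,\frac{\mathrm{d}x\mathrm{d}t}{|x|^{1+2s}}.
\]
For each $\pm$, I would apply Theorem \ref{t:Main} with $\phi(\rho)=\pm\rho$, $\psi(\rho)=\rho^{-s}$, $\tau=1+2s$ (note $\tau\in(1,d)$ thanks to $s\in(0,\tfrac{d-1}{2})$), and $\theta(\rho)=\rho^{\frac{\tau-1}{4}}=\rho^{s/2}$, so that $\theta(-\Lambda)=(-\Lambda)^{s/2}$ and \eqref{e:phipsi} holds. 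The bound becomes
\[
\int_\mathbb{R}\int_{\mathbb{R}^d} |(-\Lambda)^{\frac{s}{2}}u_\pm(x,t)|^2\,\frac{\mathrm{d}x\mathrm{d}t}{|x|^{1+2s}}\le \mathbf{B}(d,1+2s;\theta)\,\|f_\pm\|_{\dot{H}^s(\mathbb{R}^d)}^2.
\]
Adding the two estimates and using the parallelogram identity
\[
\|f_+\|_{\dot H^s}^2+\|f_-\|_{\dot H^s}^2=\tfrac12\bigl(\|u(0)\|_{\dot H^s}^2+\|\partial_tu(0)\|_{\dot H^{s-1}}^2\bigr)
\]
(which uses that $\||\nabla|(f_+-f_-)\|_{\dot H^{s-1}}=\|f_+-f_-\|_{\dot H^s}$) produces the factor $\tfrac12$ on the right-hand side, hence the $2$ on the left-hand side.

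The explicit value of the constant then comes from the computation of $\mathbf{B}(d,\tau;\theta)$ for $\theta(\rho)=\rho^{(\tau-1)/4}$ carried out at the start of Section \ref{section:homogeneousg}, where it is shown that $(\beta_k)$ is strictly increasing to $\pi 2^{2-\tau}\Gamma(\tau-1)/\Gamma(\tau/2)^2$; substituting $\tau=1+2s$ gives $\pi 2^{1-2s}\Gamma(2s)/\Gamma(s+\tfrac12)^2$. Because this supremum is only attained in the limit, Theorem \ref{t:Main} tells us that $\mathbf{K}(d,1+2s;\theta)=\emptyset$, i.e.\ there is no extremiser in the forward estimate for either $u_\pm$, which translates (via the orthogonal decomposition above) to the absence of extremisers for the wave estimate.

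The only point requiring a little care, and the step I expect to be the mildest obstacle, is the verification that the strictly increasing character of $(\beta_k)$ for the $u_+$ and $u_-$ pieces suffices to rule out extremisers in the combined inequality: since the right-hand side splits as a sum over $\pm$ and the left-hand side likewise splits orthogonally, equality in the combined estimate would force equality in each $\pm$-piece separately, and this is impossible unless $f_\pm=0$, which gives $u\equiv 0$; thus there are no nonzero extremisers, as claimed.
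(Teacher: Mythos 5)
Your proposal is correct and is essentially the paper's own (largely implicit) argument: the splitting $u=u_++u_-$, Plancherel in time, two applications of Theorem \ref{t:Main} with $\phi(\rho)=\pm\rho$, $\psi(\rho)=\rho^{-s}$, $\tau=1+2s$ and the parallelogram law exactly mirror the proof of Corollary \ref{c:W}, while the value of the constant and the absence of extremisers come from the Section \ref{section:homogeneousg} computation showing $(\beta_k)$ is strictly increasing to $\pi 2^{2-\tau}\Gamma(\tau-1)/\Gamma(\tfrac{\tau}{2})^2$, so the supremum is not attained. Your closing remark that equality in the summed inequality forces equality in each $\pm$-piece separately is the right way to rule out extremisers for the combined estimate.
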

\begin{corollary}
Let $d \geq 2$, $s \in (-\frac{1}{2},\frac{d}{2}-1)$ and suppose
that $\partial_{tt} u - \Delta u + u = 0$ on $\mathbb{R}^{d+1}$.
Then
\[
2\int_{\mathbb{R}} \int_{\mathbb{R}^d} |(1-\Delta)^{\frac{1}{4}}(-\Lambda)^{\frac{1+2s}{4}}u(x,t)|^2 \, \frac{\mathrm{d}x\mathrm{d}t}{|x|^{2(1+s)}}
\leq \pi 2^{-2s} \frac{\Gamma(2s+1)}{\Gamma(s+1)^2} \,\|(u(0),\partial_tu(0))\|^2,
\]
the constant is optimal and there are no extremisers. Here, the norm
on the initial data is given by
\[
\|(u(0),\partial_tu(0))\|^2 = \|u(0)\|_{\dot{H}^s(\mathbb{R}^d)}^2 +
\|u(0)\|_{\dot{H}^{s+1}(\mathbb{R}^d)}^2 +
\|\partial_tu(0)\|_{\dot{H}^{s}(\mathbb{R}^d)}^2.
\]
\end{corollary}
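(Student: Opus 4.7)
The plan is to mimic the proof of Corollary \ref{c:KG}, adapting for the homogeneous angular weight $\theta(\rho) = \rho^{(\tau-1)/4}$ treated in this section. First, I would decompose any solution $u$ of $\partial_{tt}u - \Delta u + u = 0$ as $u = u_+ + u_-$ with $u_\pm(t) = \exp(\pm it(1-\Delta)^{1/2})f_\pm$, where $u(0) = f_+ + f_-$ and $\partial_t u(0) = i(1-\Delta)^{1/2}(f_+ - f_-)$. Since the time-frequency supports of $u_+$ and $u_-$ are disjoint, Plancherel in $t$ gives
\begin{equation*}
\int_{\mathbb{R}}\int_{\mathbb{R}^d}|(1-\Delta)^{\frac{1}{4}}(-\Lambda)^{\frac{1+2s}{4}}u|^2\,\frac{\mathrm{d}x\mathrm{d}t}{|x|^{2(1+s)}} = \sum_{\pm}\int_{\mathbb{R}}\int_{\mathbb{R}^d}|(1-\Delta)^{\frac{1}{4}}(-\Lambda)^{\frac{1+2s}{4}}u_\pm|^2\,\frac{\mathrm{d}x\mathrm{d}t}{|x|^{2(1+s)}}.
\end{equation*}

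To apply Theorem \ref{t:Main} I would take $\phi(\rho) = \mp(1+\rho^2)^{1/2}$ and $\tau = 2(1+s)$, so that \eqref{e:phipsi} forces $\psi(\rho) = (1+\rho^2)^{-1/4}\rho^{-s}$ and hence $\psi(|\nabla|) = (1-\Delta)^{-1/4}|\nabla|^{-s}$. The integrand contains $(1-\Delta)^{1/4}$ rather than $\psi(|\nabla|)$, so to reconcile this I would introduce $v_\pm := (1-\Delta)^{1/2}|\nabla|^{s}u_\pm$; by commutativity of the functional calculus, $v_\pm$ solves the same first-order Klein--Gordon propagation as $u_\pm$, and
\begin{equation*}
(1-\Delta)^{\frac{1}{4}}(-\Lambda)^{\frac{1+2s}{4}}u_\pm = \psi(|\nabla|)(-\Lambda)^{\frac{1+2s}{4}}v_\pm.
\end{equation*}
Theorem \ref{t:Main}, combined with the computation of $\mathbf{B}(d,\tau;\theta)$ at the beginning of Section \ref{section:homogeneousg}, then yields
\begin{equation*}
\int_{\mathbb{R}}\int_{\mathbb{R}^d}|(1-\Delta)^{\frac{1}{4}}(-\Lambda)^{\frac{1+2s}{4}}u_\pm|^2\,\frac{\mathrm{d}x\mathrm{d}t}{|x|^{2(1+s)}} \leq \pi 2^{-2s}\frac{\Gamma(2s+1)}{\Gamma(s+1)^2}\,\|v_\pm(0)\|_{L^2(\mathbb{R}^d)}^2,
\end{equation*}
and the identity $v_\pm(0) = (1-\Delta)^{1/2}|\nabla|^{s}f_\pm$ gives $\|v_\pm(0)\|_{L^2}^2 = \|f_\pm\|_{\dot{H}^s}^2 + \|f_\pm\|_{\dot{H}^{s+1}}^2$ via Plancherel.

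To close the argument I would sum over $\pm$ and apply the parallelogram law to each of $\|f_+ \pm f_-\|_{\dot{H}^s}^2$ and $\|f_+ \pm f_-\|_{\dot{H}^{s+1}}^2$, obtaining
\begin{equation*}
2\sum_{\pm}\bigl(\|f_\pm\|_{\dot{H}^s}^2+\|f_\pm\|_{\dot{H}^{s+1}}^2\bigr) = \|u(0)\|_{\dot{H}^s}^2 + \|u(0)\|_{\dot{H}^{s+1}}^2 + \|\partial_t u(0)\|_{\dot{H}^s}^2,
\end{equation*}
which absorbs the factor $2$ in front of the space-time integral and produces exactly the right-hand side of the claimed inequality. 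Optimality and the absence of extremisers follow from the analysis of Section \ref{section:homogeneousg}, where it is shown that for $\theta(\rho) = \rho^{(\tau-1)/4}$ the supremum of $\beta_k$ is only attained in the limit $k \to \infty$, so $\mathbf{K}(d,\tau;\theta) = \emptyset$; if $u \not\equiv 0$ were extremal then both $v_\pm$ would have to saturate their forward bounds in Theorem \ref{t:Main}, which is impossible. The main obstacle is essentially bookkeeping, namely carefully verifying that $v_\pm$ solves the same Klein--Gordon propagation, that $\psi(|\nabla|)$ and $(-\Lambda)^{(1+2s)/4}$ commute through the space-time Fourier transforms as required, and that the parallelogram computation reassembles the stated initial-data norm with the correct numerical constant.
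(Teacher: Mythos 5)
Your proposal is correct and follows essentially the same route as the paper: the paper proves this corollary by combining the half-wave decomposition and parallelogram argument from the proof of Corollary \ref{c:KG} (with $\phi(\rho)=\pm(1+\rho^2)^{1/2}$, $\psi(\rho)=(1+\rho^2)^{-1/4}\rho^{-s}$, $\tau=2(1+s)$) with the computation at the start of Section \ref{section:homogeneousg} showing that for $\theta(\rho)=\rho^{\frac{\tau-1}{4}}$ the sequence $(\beta_k)$ is strictly increasing to $\pi 2^{2-\tau}\Gamma(\tau-1)/\Gamma(\tfrac{\tau}{2})^2$, so $\mathbf{K}=\emptyset$ and there are no extremisers. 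Your only slip is the immaterial sign in $\phi(\rho)=\mp(1+\rho^2)^{1/2}$ (it should be $\pm$ to match $u_\pm=e^{\pm it(1-\Delta)^{1/2}}f_\pm$, though only $|\phi'|$ enters), and you in fact supply more detail (the substitution $v_\pm$ and the bookkeeping of the initial-data norm) than the paper records.
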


We finish by stating a more general result than Theorem \ref{t:Main}, which is not restricted to homogeneous weights, and does not require the dispersion relation and smoothing functions $\phi$ and $\psi$ to satisfy \eqref{e:phipsi}. The cost of this generality is that the optimal constants are less explicit and precise information about the extremisers is less readily available.
\begin{theorem} \label{t:mostgeneral}
Suppose $w : [0,\infty) \to [0,\infty)$, $\psi : [0,\infty) \to [0,\infty)$ and
$\phi : [0,\infty) \to \mathbb{R}$ are such that $\alpha_k : [0,\infty) \to [0,\infty)$ is
continuous for each $k \in \mathbb{N}_0$, where
\[
\alpha_k(\rho) = \frac{\rho \psi(\rho)^2}{|\phi'(\rho)|} \int_0^\infty J_{\nu(k)}(r \rho)^2 rw(r) \, \mathrm{d}r\,,
\]
$\nu(k) = \frac{d}{2} + k -1$ and $J_{\nu(k)}$ is the Bessel function of the first kind with order $\nu(k)$. If $i\partial_t u + \phi(|\nabla|)u = 0$ then
\begin{equation*}
\inf_{k\in\N_0} \inf_{\rho > 0} \beta_k(\rho) \, \|u(0)\|_{L^2(\mathbb{R}^d)}^2 \leq
\int_{\mathbb{R}} \int_{\mathbb{R}^d} |\psi(|\nabla|) \,\theta(-\Lambda) \,u(t,x)|^2 w(|x|) \, \mathrm{d}x\mathrm{d}t
\leq \sup_{k\in\N_0} \sup_{\rho > 0} \beta_k(\rho) \, \|u(0)\|_{L^2(\mathbb{R}^d)}^2\,,
\end{equation*}
where 
\[
\beta_k(\rho) = 2\pi |\theta(k(k+d-2))|^2 \alpha_k(\rho)
\]
and the constants are optimal.
\end{theorem}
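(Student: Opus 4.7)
The plan is to adapt the spectral strategy from the proof of Theorem~\ref{t:Main}, with the essential difference that, in the absence of the compatibility relation \eqref{e:phipsi} and for a general radial weight $w$, the analogue of $S_\theta^\ast S_\theta$ will no longer act as a scalar on each $\mathfrak{H}_k$; instead, on $\mathfrak{H}_k$ it will be realised as a radial Fourier multiplier with symbol $\beta_k(|\xi|)$. First I would introduce the operator
\[
Tf(x,t) = w(|x|)^{1/2}\,\theta(-\Lambda) \int_{\mathbb{R}^d} \exp\bigl(i(x\cdot\xi + t\phi(|\xi|))\bigr) \,\psi(|\xi|)\, f(\xi)\,\mathrm{d}\xi,
\]
which, in direct analogy with \eqref{e:Slink}, reduces the claim to a two-sided bound for $\|T\widehat{u(0)}\|_{L^2(\mathbb{R}^{d+1})}^2$ in terms of $\|u(0)\|_{L^2(\mathbb{R}^d)}^2$.

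Next I would exploit Proposition~\ref{p:commute}, together with the fact that $H_k$ commutes with multiplication by the radial weight $w(|x|)$, in order to decompose the problem along the orthogonal sum $L^2(\mathbb{R}^d) = \bigoplus_{k} \mathfrak{H}_k$. Cross terms corresponding to distinct $k$'s vanish after integrating in the angular variable because spherical harmonics of different degrees are orthogonal on $\mathbb{S}^{d-1}$. This reduces the theorem to proving, for each fixed $k\in\mathbb{N}_0$ and each $F\in\mathfrak{H}_k$ with $\widehat{F}(\xi) = Y(\xi')h(|\xi|)$ (where $Y\in\mathcal{H}_k$), the identity
\[
\int_{\mathbb{R}}\!\int_{\mathbb{R}^d} \bigl|\psi(|\nabla|)\,\theta(-\Lambda)\,e^{it\phi(|\nabla|)}F(x)\bigr|^2 w(|x|)\,\mathrm{d}x\mathrm{d}t \;=\; C \int_0^\infty \beta_k(\rho)\,|h(\rho)|^2\,\rho^{d-1}\,\mathrm{d}\rho,
\]
with the same constant $C$ that arises in $\|F\|_{L^2(\mathbb{R}^d)}^2 = C\int_0^\infty |h(\rho)|^2\rho^{d-1}\,\mathrm{d}\rho$.

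To establish this identity I would apply the Hecke--Bochner formula to rewrite $\psi(|\nabla|)e^{it\phi(|\nabla|)}F(x)$ in the product form $Y(x') V(|x|,t)$, where (up to harmless numerical constants) $V(r,t) = r^{-(d/2-1)}\int_0^\infty e^{it\phi(\rho)}\psi(\rho)h(\rho)\,J_{\nu(k)}(r\rho)\,\rho^{d/2}\,\mathrm{d}\rho$. Then Plancherel in $t$, carried out via the substitution $s = \phi(\rho)$ (valid since $\phi$ is injective and differentiable and yielding the Jacobian $|\phi'(\rho)|^{-1}$), converts $\int_{\mathbb{R}}|V(r,t)|^2\,\mathrm{d}t$ into $2\pi r^{-(d-2)}\int_0^\infty |\psi(\rho)h(\rho)|^2\,J_{\nu(k)}(r\rho)^2\,\rho^d\,|\phi'(\rho)|^{-1}\,\mathrm{d}\rho$. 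Fubini then isolates the inner $r$-integral $\int_0^\infty J_{\nu(k)}(r\rho)^2\,r\,w(r)\,\mathrm{d}r$, which combined with the factor $\rho\psi(\rho)^2/|\phi'(\rho)|$ is precisely $\alpha_k(\rho)$; multiplying by $2\pi |\theta(\mu_k)|^2$, which has been pulled out by the spectral action of $\theta(-\Lambda)$ on $\mathfrak{H}_k$, produces $\beta_k(\rho)$.

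With this identity in hand, the two-sided bound is immediate: the left-hand side divided by $\|F\|^2$ is a nonnegative weighted average of $\beta_k(\rho)$ and hence lies in $[\inf_\rho \beta_k(\rho),\sup_\rho \beta_k(\rho)]$; summing over $k$ via the orthogonal decomposition completes the inequalities. Optimality follows by choosing, for each $k$, data $F\in\mathfrak{H}_k$ whose radial profile $|h(\rho)|^2\rho^{d-1}$ concentrates at a point $\rho_0$ at which $\beta_k(\rho_0)$ approaches the relevant extremum, and then optimising over $k$; the assumed continuity of $\alpha_k$ is exactly what makes such approximate identities realise the supremum and infimum in the limit. The main technical obstacle is the rigorous justification of Fubini and of the $s=\phi(\rho)$ Plancherel step, which I would handle by first working with Schwartz data whose Fourier support is compact and disjoint from the critical set of $\phi$, and then extending by density using the continuity of $\alpha_k$ as the standing integrability hypothesis.
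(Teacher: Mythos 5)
Your proposal is correct and follows essentially the same route the paper takes: the paper proves this theorem by factoring $S_\theta = S_1 \circ \theta(-\Lambda)$ and then invoking the argument of Walther \cite{WaltherBest}, which is precisely the spherical-harmonic decomposition, Hecke--Bochner reduction to Hankel transforms, and Plancherel-in-$t$ computation (with the substitution $s=\phi(\rho)$) that you carry out explicitly. You have in effect supplied the details that the paper omits by reference, and your use of the continuity of $\alpha_k$ for the optimality/concentration step is the intended one.
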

In the case where the weight $w$ is homogeneous, but $\phi$ and $\psi$ do not necessarily satisfy \eqref{e:phipsi}, we may deduce the following.
\begin{corollary} \label{c:generalhomogeneous}
Let $\tau \in (1,d)$. Suppose $\psi : [0,\infty) \to [0,\infty)$ and
$\phi : [0,\infty) \to \mathbb{R}$ are such that $\zeta : [0,\infty) \to [0,\infty)$ is
continuous, where
\[
\zeta(\rho) = \frac{\rho^{\tau-1}\psi(\rho)^2}{|\phi'(\rho)|}\,.
\]
Let 
\[
\beta_k = \pi 2^{2-\tau} \frac{\Gamma(\tau - 1)\Gamma(k+\tfrac{d-\tau}{2})}
{\Gamma(\tfrac{\tau}{2})^2\Gamma(k+\tfrac{d+\tau}{2}-1)} \,|\theta(k(k+d-2))|^2
\]
for $k \in \mathbb{N}_0$. If $i\partial_t u + \phi(|\nabla|)u = 0$ then
\begin{equation*}
\inf_{k\in\N_0} \beta_k \inf_{\rho > 0} \zeta(\rho) \, \|u(0)\|_{L^2(\mathbb{R}^d)}^2 \leq
\int_{\mathbb{R}} \int_{\mathbb{R}^d} |\psi(|\nabla|) \,\theta(-\Lambda) \,u(t,x)|^2  \, \frac{\mathrm{d}x\mathrm{d}t}{|x|^\tau}
\leq \sup_{k\in\N_0} \beta_k \sup_{\rho > 0} \zeta(\rho) \, \|u(0)\|_{L^2(\mathbb{R}^d)}^2
\end{equation*}
and the constants are optimal.
\end{corollary}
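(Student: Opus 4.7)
The plan is to reduce Corollary \ref{c:generalhomogeneous} directly to Theorem \ref{t:mostgeneral} by specialising $w(r) = r^{-\tau}$ and explicitly evaluating the Bessel-type integral in the definition of $\alpha_k(\rho)$. With this choice of weight,
\[
\alpha_k(\rho) = \frac{\rho\psi(\rho)^2}{|\phi'(\rho)|} \int_0^\infty J_{\nu(k)}(r\rho)^2 r^{1-\tau} \, \mathrm{d}r,
\]
and the substitution $s = r\rho$ pulls out a factor of $\rho^{\tau-2}$, so the whole $\rho$-dependent prefactor collapses to $\zeta(\rho) = \rho^{\tau-1}\psi(\rho)^2/|\phi'(\rho)|$, separating cleanly from the $k$-dependent integral $\int_0^\infty J_{\nu(k)}(s)^2 s^{1-\tau}\, \mathrm{d}s$.

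Next I would identify this $k$-dependent integral as a Weber--Schafheitlin integral and invoke the classical closed form
\[
\int_0^\infty J_{\nu(k)}(s)^2 s^{1-\tau} \, \mathrm{d}s = \frac{\Gamma(\tau-1)\,\Gamma(k + \tfrac{d-\tau}{2})}{2^{\tau-1}\,\Gamma(\tfrac{\tau}{2})^2\,\Gamma(k + \tfrac{d+\tau}{2} - 1)},
\]
which holds under the convergence conditions $1 < \tau < 2\nu(k)+2 = d + 2k$; these are satisfied for every $k \in \mathbb{N}_0$ since $\tau \in (1,d)$ and $\nu(k) = \tfrac{d}{2} + k - 1$. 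Together with the factor $2\pi|\theta(k(k+d-2))|^2$, this yields precisely $\beta_k(\rho) = \zeta(\rho)\,\beta_k$ with $\beta_k$ as in the statement of the corollary. The continuity of $\alpha_k$ required by Theorem \ref{t:mostgeneral} then follows immediately from the assumed continuity of $\zeta$ (and this in fact is the sole purpose of hypothesising that $\zeta$ is continuous).

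Finally, since each $\beta_k \geq 0$ and $\zeta(\rho) \geq 0$, the double infimum and supremum separate as
\[
\inf_{k \in \mathbb{N}_0}\inf_{\rho > 0}\beta_k(\rho) = \inf_{k \in \mathbb{N}_0}\beta_k \cdot \inf_{\rho > 0}\zeta(\rho), \qquad \sup_{k \in \mathbb{N}_0}\sup_{\rho > 0}\beta_k(\rho) = \sup_{k \in \mathbb{N}_0}\beta_k \cdot \sup_{\rho > 0}\zeta(\rho),
\]
and Theorem \ref{t:mostgeneral} then delivers the claimed two-sided inequality with the stated sharp constants. The only genuinely analytic ingredient is the Weber--Schafheitlin evaluation above; this is entirely classical (essentially the same computation already appears behind Proposition \ref{t:Sg}), and the range $\tau \in (1,d)$ is exactly what guarantees convergence of the integral uniformly in $k$. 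As a consistency check, when \eqref{e:phipsi} holds one has $\zeta \equiv 1$, and the present corollary reduces to Theorem \ref{t:Main}, as it should.
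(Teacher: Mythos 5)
Your proposal is correct and follows essentially the same route as the paper: the paper likewise deduces the corollary from Theorem \ref{t:mostgeneral} by taking $w(r)=r^{-\tau}$ and invoking the Weber--Schafheitlin evaluation $\int_0^\infty J_{\nu(k)}(r\rho)^2\,r^{1-\tau}\,\mathrm{d}r = 2^{1-\tau}\Gamma(\tau-1)\Gamma(k+\tfrac{d-\tau}{2})\,\Gamma(\tfrac{\tau}{2})^{-2}\Gamma(k+\tfrac{d+\tau}{2}-1)^{-1}\rho^{\tau-2}$, so that $\beta_k(\rho)=\beta_k\,\zeta(\rho)$ and the extrema factorise. Your added checks (the convergence range $1<\tau<d+2k$ and the role of the continuity hypothesis on $\zeta$) are accurate details the paper leaves implicit.
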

It is clear that Corollary \ref{c:generalhomogeneous} extends the sharp estimates in Theorem \ref{t:Main}
since $\zeta$ is identically equal to one under the assumption \eqref{e:phipsi}. However, the situation
regarding extremisers is more complicated when \eqref{e:phipsi} does not hold. We may use Theorem 1.2 from
\cite{BS} to see that the existence of extremisers for the upper bound is equivalent to the existence of some $k_0 \in \mathbb{N}_0$ such that 
\[
\beta_{k_0} = \sup_{k \in \mathbb{N}_0} \beta_k
\]
and a subset $\mathcal{S}$ of $(0,\infty)$ with positive Lebesgue measure such that 
\[
\zeta(\rho_0) = \sup_{\rho > 0} \zeta(\rho) \qquad \text{for each $\rho_0 \in \mathcal{S}$.}
\]
An analogous remark is also valid for the lower bound, where each instance of $\sup$ is replaced by $\inf$.

Corollary \ref{c:generalhomogeneous} follows immediately from Theorem \ref{t:mostgeneral} and the formula
\[
\int_0^\infty J_{\nu(k)}(r\rho)^2 \, \frac{\mathrm{d}r}{r^{\tau-1}} = 2^{1-\tau} \frac{\Gamma(\tau-1)\Gamma(k + \frac{d-\tau}{2})}{\Gamma(\frac{\tau}{2})^2 \Gamma(k + \frac{d+\tau}{2} - 1)} \rho^{\tau - 2}.
\]
This formula was also used in the proof of Theorem 1.6 in \cite{BS}, and Corollary \ref{c:generalhomogeneous} is in fact a generalisation of Theorem 1.6 in \cite{BS} where the case $\theta$ identically equal to one is given.

In a similar manner, Theorem \ref{t:mostgeneral} is a generalisation of Theorem 4.1(b) of \cite{WaltherBest} where the case $\theta$ identically equal to one is given. To prove Theorem \ref{t:mostgeneral} we use our observation that the operator $S_\theta$, introduced in \eqref{e:Sdefn}, satisfies \eqref{e:Sg} and proceed using the same argument in \cite{WaltherBest}; we omit the details. We remark that this proof via the argument in \cite{WaltherBest} is also based on a spherical harmonic decomposition and orthogonality arguments, but does not yield precise spectral information as in Proposition \ref{t:Sg}.

\end{document}